\newtheorem{theorem}{Theorem}[section]
\newtheorem{remark}[theorem]{Remark}
\newtheorem{lemma}[theorem]{Lemma}
\newtheorem{definition}[theorem]{Definition}
\newtheorem{corollary}[theorem]{Corollary}
\newcommand{\e}{\mathrm{e}}
\newcommand{\MD}{\mathrm{MD}}
\newcommand{\SMD}{\mathrm{SMD}}
\newcommand{\whp}{w.h.p.}
\newcommand{\eps}{\epsilon}
\newcommand{\symdef}{\triangle}
\newcommand{\Hmax}{H_{\mathrm{max}}}
\newcommand{\Hrow}{H_{\br}}
\newcommand{\br}{\mathbf{r}}
\newcommand{\Diam}{\mathrm{Diam}}
\newcommand{\Bin}{\textrm{Bin}}
\newcommand{\Exp}{\mathbb E}
\title{The Metric Dimension of Sparse Random Graphs}
\author{
Josep D\'iaz \textsuperscript{1*} \quad
Harrison Hartle \textsuperscript{2$\dagger$} \quad
Cristopher Moore \textsuperscript{2$\ddagger$}
}
\begin{document}

\maketitle

\begin{abstract}
In 2013, Bollob\'as, Mitsche, and Pra\l at gave upper and lower bounds for the likely metric dimension of random Erd\H{o}s-R\'enyi graphs $G(n,p)$ for a large range of expected degrees $d=pn$. However, their results only apply when $d \ge \log^5 n$, leaving open sparser random graphs with $d < \log^5 n$. Here we provide upper and lower bounds on the likely metric dimension of $G(n,p)$ from just above the connectivity transition, i.e., where $d=pn=c \log n$ for some $c > 1$, up to $d=\log^5 n$. Our lower bound technique is based on an entropic argument which is more general than the use of Suen's inequality by Bollob\'as, Mitsche, and Pra\l at, whereas our upper bound is similar to theirs.
\end{abstract}

\footnotetext{\textsuperscript{1} Universitat Polit\`ecnica de Catalunya, Barcelona, Spain}
\footnotetext{\textsuperscript{2} Santa Fe Institute, Santa Fe, NM, USA}
\footnotetext{\textsuperscript{*} diazcort@gmail.com}
\footnotetext{\textsuperscript{$\dagger$} hhartle@santafe.edu}
\footnotetext{ \textsuperscript{$\ddagger$} moore@santafe.edu}

\section{Introduction}

Let $G=(V,E)$ be a graph with $|V|=n$ vertices. Let $d(u,v)$ denote the shortest path distance in $G$. If $u,v \in V$ are distinct and $w \in V$, we say that \emph{$w$ separates $u,v$} if $d(u,w) \ne d(v,w)$. We say that a set $W\subset V$ separates $u,v$ if some $w \in W$ does so, and we call $W$ a \emph{separator} if it separates all distinct pairs. (We will sometimes call $W$ the set of \emph{landmark} vertices.) The \emph{metric dimension} of $G$, denoted $\MD(G)$, is the size of the smallest separator $W$.

This problem was defined independently by Slater~\cite{Slater75} and Harary-Melter~\cite{HararyM76} more than 50 years ago. From the complexity point of view, determining $\MD(G)$ is NP-complete for general graphs~\cite{GareyJohnson,Khuller96}, planar graphs (even with bounded degree)~\cite{diaz17}, split and bipartite graphs~\cite{Epstein12}, and interval and permutation graphs (even with diameter 2)~\cite{Foucaud17}. For general graphs, $\MD(G)$ can be approximated in polynomial time within a factor of $O(\log n)$~\cite{Khuller96,Hauptmann12}, but not within an $o(\log n)$ factor unless $\mathrm{P}=\mathrm{NP}$~\cite{Beerliova}. Even an $(1-\eps) \log n$ approximation in polynomial time would imply that $\mathrm{NP} \subseteq \mathrm{DTIME}(n^{\log \log n})$~\cite{Hauptmann12}. 

On the positive side, there are polynomial time algorithms for $\MD(G)$ for trees~\cite{Khuller96}, chain graphs~\cite{Fernau15}, 
co-graphs~\cite{Epstein12}, and outer-planar graphs~\cite{diaz17}. There also have been a number of works dealing with the parameterized complexity of $\MD(G)$, 
producing some FPT algorithms under certain parametrizations; see~\cite{Belmonte15} and the bibliography there. 
In the already cited~\cite{Khuller96} the authors characterized graphs with metric dimensions $1$ and $2$, and provided a general bound that appears as Equation~\eqref{eq:khuller} below. For further results on the combinatorial nature of the problem, see~\cite{Chartrand00}.

Given the difficult nature of the metric dimension problem on many types of graphs, a natural  question is to find the typical value of $\MD(G)$ for various random graphs.For the Erd\H{o}s-Rényi-Gilbert random graph model $G(n,p)$,
Bollob\'as, Mitsche and Pra\l at~\cite{BollMitPra} largely determined the likely value of $\MD(G)$ quite precisely for a wide range of $p$; we review their results as Theorem~\ref{thm:bollmitpra} in the next Section. However, their results do not apply unless the average degree $d=np$ is at least $\log^5 n$, leaving sparser random graphs as an open problem.

In this paper we extend the results of~\cite{BollMitPra} to sparser random graphs, and almost all the way down to the connectivity transition at $d=\log n$. Specifically, we give upper and lower bounds on $\MD(G)$ whenever $d \ge c \log n$ for any $c = c(n) > 1$. In exchange for this wider range of $d$, our lower bound is somewhat looser than those of~\cite{BollMitPra}.

We obtain our results with two techniques. Our lower bound is information-theoretic, using the entropy of the matrix of pairwise distances to show that a certain number of landmarks are needed to distinguish all pairs of vertices. This is significantly simpler than the lower bound method in~\cite{BollMitPra} which uses Suen's inequality.

We note that \'Odor and Thiran~\cite{odor-thiran,Odor22}  recently considered an adaptive version of the metric dimension, where there is a hidden vertex $v^\star$ and a player can ask a series of adaptive queries $v_1,\ldots,v_\ell$, each of which yields the distance $d(v_i,v^\star)$. They define the \emph{sequential metric dimension} $\SMD(G)$ of a graph $G$ as the smallest $\ell$ such that the player can always succeed in identifying $v^\star$. Clearly $\SMD(G) \le \MD(G)$, since the usual metric dimension yields a non-adaptive strategy. They prove lower bounds on $\SMD(G(n,d/n))$  and find that for sufficiently large $d$ the ratio between the $\MD$ and $\SMD$ is at most a constant. However, like~\cite{BollMitPra}, their results only apply when $d \ge \log^5 n$ due to their use of Suen's inequality rather than entropy. In fact our entropic lower bound on $\MD(G(n,p))$ is also a lower bound on $\SMD(G(n,p))$.

Our upper bound technique is identical to that of~\cite{BollMitPra}. Given $u$ and $v$, consider the ``shells'' $V_t(u)$ and $V_t(v)$ of vertices whose shortest path distance to $u$ or $v$ respectively is exactly $t$. If the symmetric difference $V_t(u) \symdef V_t(v)$ is large for some $t$, then there are many vertices $w$ that separate $u$ and $v$. If this is true for all pairs $u,v$, we can construct a separator $W$ simply by choosing landmarks uniformly at random.

Thus both our upper and lower bound work by inductively bounding the size of the shells $V_t$ for $t=1, 2, 3, \ldots$  It turns out that almost all of the fluctuations in $|V_t|$ occur at the first step of this process, namely in the degree $|V_1|$ of $u$ or $v$. Proving our bounds for $d=c \log n$ for all constant $c > 1$, as opposed to some larger constant, requires a concentration inequality for the binomial distribution which is sharper than the standard Chernoff bound. Moreover, in the regime $d=O(\log n)$, the vertex degrees really do fluctuate multiplicatively, between $\alpha d$ and $\beta d$ for some constants $0 < \alpha < 1 < \beta$. This introduces an additional multiplicative gap in our results. This may be unavoidable, since some pairs of vertices really do have fewer separators than other pairs.

Throughout the paper, we assume that the reader is familiar with the basic properties of $G(n,p)$ (see for example~\cite{Jansonbook,Moorebook}). In particular, since we have $d > \log n$, we condition throughout on the event that $G(n,p)$ is connected. We also assume basic knowledge of entropy manipulation (see for example~\cite{Shannon,Radha01,cover1999elements}).

We say a series of events $E=(E_n)$ holds with high probability (\whp) if $\Pr[E_n] \to 1$ as $n \to \infty$. We use asymptotic notation in these statements as follows: ``\whp\ $f(n) = O(g(n))$'' means that there is a constant $C$ such that \whp\ $f(n) \le C g(n)$, and ``\whp\ $f(n)=o(g(n))$'' means that \whp\ $f(n) \le C g(n)$ for any constant $C > 0$. We use $\Omega$ and $\omega$ analogously, if $f(n) \ge C g(n)$ \whp\ for some, or all, constants $C > 0$.

\section{Two cases}

The behavior of the metric dimension of $G(n,d/n)$ turns out to be somewhat strange. Imagine a series of shells $V_t$ surrounding a vertex, consisting of the vertices at distance $t = 1, 2, 3 \ldots$.  Roughly speaking, we expect the number of vertices in $V_t$ to grow as $d^t$ where $d=np$ is the average degree. The question is what happens when $t$ gets close to the diameter of $G$, so that the $t$th shell $V_t$ contains a significant fraction of all $n$ vertices. This depends on how close $n$ is to an integer power of $d$. 

Following~\cite{BollMitPra} with small changes, we adopt the following notation, which we will use throughout.

\begin{definition}
\label{def:cases}
Given $d$, $n$, and $p=d/n$, let $t^*$ be the largest integer $t$ such that $d^t = o(n)$. Let 
\begin{equation}
\label{eq:gamma}
\gamma = \gamma(n) = p d^{t^*} = d^{t^*+1}/n \, .
\end{equation}
We then distinguish two cases:
\begin{itemize}
\item Case 1: $\gamma = \Theta(1)$. 
\item Case 2: $\gamma = \omega(1)$.
\end{itemize}
\end{definition}
\noindent 
Note that this definition implies that $\gamma = o(d)$.

As we will see, the key difference between Case 1 and Case 2 is whether the $(t^*+1)$st and $(t^*+2)$nd shells each contain $\Theta(n)$ vertices, with all others containing $o(n)$, or whether the $(t^*+1)$st shell contains all but $o(n)$ vertices. This in turn  determines whether the entropy of the distribution of distances from a given vertex is $\Theta(1)$ or $o(1)$, and whether a typical pair of vertices has $\Theta(n)$ or $o(n)$ separators. As a result, the metric dimension will be $\Theta(\log n)$ in Case 1 and $\omega(\log n)$ in Case 2.

As pointed out in~\cite{BollMitPra}, this causes $\MD(G)$ to undergo a curious zig-zag behavior. For a given function $d=d(n)$, as $n$ increases we briefly visit Case 1 whenever $n$ is close to an integer power of $d$. In between these integer powers, we have Case 2. Their main result, slightly rewritten to focus on the sparse case $d=o(n)$, is the following:

\begin{theorem}[\cite{BollMitPra}] 
\label{thm:bollmitpra}
Let $G=G(n,p)$ where $p=d/n$ and $d = d(n) > \log^5 n$ and $d=o(n)$. Let $t^*$ and $\gamma$ be as in Definition~\ref{def:cases}. 

In Case 1 where $\gamma=\Theta(1)$ then \whp
\[
\MD(G) = (1 \pm o(1)) \,\frac{2 \log n}{-\log q} = \Theta (\log n) \, ,
\]
where 
\[ 
q = (\e^{-\gamma})^2 + (1-\e^{-\gamma})^2 \, .
\]

In Case 2 where $\gamma=\omega(1)$,
\whp 
\[
\MD(G) 
= \Theta\!\left( \frac{\log n}{{\gamma/d} + \e^{-\gamma}} \right)
= \omega(\log n) \, .
\]
\end{theorem}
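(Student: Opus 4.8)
\emph{The plan is to} reduce both the upper and lower bounds to controlling the sizes of the breadth-first shells $V_t(u)$ of a fixed vertex $u$, obtained from the standard vertex-exposure exploration of $G(n,p)$, and then to count separators pairwise. First I would show that as long as $d^t=o(n)$ the explored ball grows essentially multiplicatively, $|V_{\le t}(u)|=(1+o(1))d^t$, so that after $t^*$ rounds only $|V_{\le t^*}(u)|=\Theta(\gamma n/d)=o(n)$ vertices have been reached. The decisive round is the next one: a so-far-unreached vertex $w$ enters $V_{t^*+1}(u)$ iff it has at least one neighbour in the current frontier $V_{t^*}(u)$, which fails with probability $(1-p)^{|V_{t^*}(u)|}=(1\pm o(1))\e^{-\gamma}$ by the definition $\gamma=pd^{t^*}$. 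Hence in Case~1 a $(1-\e^{-\gamma})$ fraction of the remaining vertices land in $V_{t^*+1}(u)$ and an $\e^{-\gamma}$ fraction in $V_{t^*+2}(u)$, whereas in Case~2 ($\gamma=\omega(1)$) all but an $\e^{-\gamma}=o(1)$ fraction land in $V_{t^*+1}(u)$. These estimates are the backbone of everything else, and I would need them not merely in expectation but with concentration strong enough to survive a union bound over all $n$ starting vertices; establishing this uniform control, via Chernoff/Azuma bounds on the exploration, is the first technical hurdle.

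Next I would translate shells into separators. A landmark $w$ separates $u,v$ precisely when $d(u,w)\ne d(v,w)$, i.e.\ when $w$ lies in different shells around $u$ and around $v$; the non-separators are those in a common shell. Treating $d(u,w)$ and $d(v,w)$ as nearly independent draws from the shell-size distribution above, the probability that a uniformly random $w$ fails to separate $(u,v)$ is $q=(\e^{-\gamma})^2+(1-\e^{-\gamma})^2$ in Case~1. In Case~2 almost all mass sits on the single distance $t^*+1$, so separators come only from the $\Theta\!\big(n(\gamma/d+\e^{-\gamma})\big)$ atypical vertices (those in $V_{\le t^*}$ or in $V_{\ge t^*+2}$), giving a non-separation probability $1-\Theta(\gamma/d+\e^{-\gamma})$. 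Justifying this near-independence of the two exploration processes, simultaneously for all pairs, is the delicate point here.

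For the upper bound I would draw $k$ landmarks uniformly at random and union-bound over pairs, $\Pr[W\text{ fails}]\le\binom{n}{2}\,\bar q^{\,k}$, where $\bar q$ is the worst-case non-separation probability over all pairs. Since $d>\log^5 n$ forces vertex degrees, and hence all shell sizes, to concentrate multiplicatively, no pair has anomalously few separators, so $\bar q\le q(1+o(1))$. Taking $k=(1+o(1))\frac{2\log n}{-\log q}$ in Case~1 (resp.\ $k=\Theta\!\big(\frac{\log n}{\gamma/d+\e^{-\gamma}}\big)$ in Case~2) then drives the bound to $0$, whence $\MD(G)\le k$ \whp.

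The lower bound is the main obstacle, and the reason~\cite{BollMitPra} invoke Suen's inequality. Here one must defeat \emph{every} candidate set, so I would union-bound over all $\binom{n}{k}\le\e^{k\log n}$ sets $W$ of size $k$. For a fixed $W$, let $\lambda\approx\binom{n}{2}q^k$ be the expected number of pairs $u,v\notin W$ with identical distance vectors to $W$; the event that $W$ resolves $G$ is exactly the event that none of these collisions occurs, and Suen's inequality (or Janson's) bounds its probability by $\e^{-(1-o(1))\lambda}$ \emph{provided} the pairwise-dependency correction is $o(\lambda)$. This correction term is the crux, since distinct collision events share both landmarks and graph edges. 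Choosing $k$ just below $\frac{2\log n}{-\log q}$ makes $\lambda=n^{\Omega(1)}$ grow much faster than $k\log n$, so $\e^{k\log n}\,\e^{-(1-o(1))\lambda}\to0$ and \whp\ no set of this size resolves $G$; the same scheme with a small enough constant yields the $\omega(\log n)$ bound in Case~2. Throughout, the recurring difficulty is making the shell-size concentration and near-independence uniform enough to be applied simultaneously to all vertices and all candidate sets.
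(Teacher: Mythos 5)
Note first that Theorem~\ref{thm:bollmitpra} is quoted from~\cite{BollMitPra}; this paper does not prove it, so the relevant comparison is with the paper's proof of its own analogue, Theorem~\ref{thm:main-combined}. Your sketch is essentially a reconstruction of the original route of~\cite{BollMitPra}: shell growth $|V_t|\approx d^t$ up to $t^*$, the estimate $(1-p)^{|V_{t^*}|}=(1\pm o(1))\e^{-\gamma}$ at the decisive step, random landmarks plus a union bound over pairs for the upper bound, and a union bound over all $\binom{n}{k}$ candidate sets combined with Suen's inequality for the lower bound. The upper-bound half coincides with what this paper does in Section~\ref{sec:upper}: there the heuristic ``near-independence of $d(u,w)$ and $d(v,w)$'' that you flag as delicate is made rigorous not by decoupling two exploration processes but by bounding the symmetric difference $\Delta_{t^*}(u,v)$ directly, using the fact that \whp\ no two vertices have three common neighbours so that $V_{t^*}(u)$ and $V_{t^*}(v)$ are nearly disjoint (Lemmas~\ref{lem:from-a-pair} and~\ref{lem:separators}). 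The lower-bound half is where the routes genuinely diverge. The Suen-based scheme you propose is exactly the one this paper deliberately abandons: the pairwise-dependency correction term, which you correctly identify as the crux but do not carry out, is the entire technical burden of~\cite{BollMitPra} and is what forces the hypothesis $d\ge\log^5 n$. This paper instead uses an entropy argument (Lemma~\ref{lem:entropy}, Corollary~\ref{cor:main}): a separator $W$ makes the $n$ rows of the distance matrix distinct, so $|W|\,\Hmax\ge\log n$ where $\Hmax$ bounds the empirical entropy of each column, and the shell estimates give $\Hmax\approx H(\e^{-\gamma})$ in Case 1 and $\Hmax\approx -(\gamma/d)\log(\gamma/d)+\gamma\e^{-\gamma}$ in Case 2. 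The trade-off: your route, if the Suen correction is actually controlled, yields the sharp two-sided constant $2\log n/(-\log q)$ in Case 1; the entropic route is much simpler, requires no independence and no union bound over candidate sets, and extends down to $d=c\log n$, but loses a constant factor in Case 1 and a $\gamma$ or $\log\log n$ factor in Case 2.

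The one substantive gap, then, is the step you yourself flag: you assert that Suen's (or Janson's) inequality gives $\Pr[W\text{ resolves }G]\le\e^{-(1-o(1))\lambda}$ with $\lambda\approx\binom{n}{2}q^k$ ``provided the pairwise-dependency correction is $o(\lambda)$,'' but verifying that proviso is not a routine check --- collision events for pairs sharing a vertex, or sharing early exploration edges, are strongly correlated, and bounding their joint contribution is where the bulk of the work in~\cite{BollMitPra} lies and where the restriction $d\ge\log^5 n$ enters. As written, your plan correctly identifies the architecture of the original proof but leaves its hardest component as an assumption.
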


\section{Our results}

We now state our bounds. They are somewhat verbose, especially when $d=O(\log n)$. Nevertheless, they determine the likely value of $\MD(G)$ to within a constant or slowly-growing functions of $n$.

\begin{theorem}
\label{thm:main-combined}
Let $G=G(n,p)$ where $p=d/n$ and $c \log n \le d \le \log^5 n$ for some $c > 1$. Let $t^*$ and $\gamma$ be as in Definition~\ref{def:cases}. Then there are constants $0 < \alpha < 1 < \beta$ depending on $c$ such that the following holds. Let 
\[ H(y) = -y \log y -(1-y) \log (1-y) \]
be the binary entropy function and let 
\[ q(\alpha,\beta) = 
1 - 2
\,(1-\e^{-\alpha\gamma}) 
\,\e^{-\beta\gamma} \, . 
\]

In Case 1 where $\gamma=\Theta(1)$, \whp 
\begin{equation}
\label{eq:summary-case1-olog}
 (1-o(1)) \,\frac{\log n}{\max_{x \in [\alpha,\beta]} H(\e^{-x\gamma})} \le \MD(G) \le (1+o(1)) \,\frac{2 \log n}{-\log q(\alpha,\beta)}
 \, .
\end{equation}
Moreover, if $c=\omega(1)$ so that $d=\omega(\log n)$, then 
\begin{equation}
\label{eq:summary-case1-wlog}
 (1-o(1)) \,\frac{\log n}{H(\e^{-\gamma})} \le \MD(G) \le (1+o(1)) \,\frac{2 \log n}{-\log q} 
 \, ,
\end{equation}
where $q=q(1,1)=(\e^{-\gamma})^2 + (1-\e^{-\gamma})^2$ as in Theorem~\ref{thm:bollmitpra}.

In Case 2 where $\gamma=\omega(1)$, 
\whp 
\begin{equation}
\label{eq:summary-case2-olog}
 (1-o(1)) \,\frac{\log n}{-(\beta \gamma/d) \log (\beta \gamma/d) 
+ \gamma \e^{-\alpha \gamma}} 
\le \MD(D) \le 
(1+o(1)) \,\frac{\log n}{\alpha\gamma/d + \e^{-\beta\gamma}} \, ,
\end{equation}
and if $d=\omega(\log n)$ we have
\begin{equation}
\label{eq:summary-case2-wlog}
(1-o(1)) \,\frac{\log n}
{-(\gamma/d) \log (\gamma/d) 
+ \gamma \e^{-\gamma}} 
\le \MD(G) \le 
(1+o(1)) \,\frac{\log n}
{\gamma/d + \e^{-\gamma}} \, .
\end{equation}
\end{theorem}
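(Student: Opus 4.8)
The plan is to prove the four sets of bounds in Theorem~\ref{thm:main-combined} by carrying out a single inductive analysis of the shell sizes $|V_t(u)|$ that simultaneously feeds both the entropic lower bound and the random-landmark upper bound. The crucial first step is a sharp degree concentration statement: since $d = c\log n$, the standard Chernoff bound is too weak to control all $n$ vertices simultaneously, so I would instead use a sharper binomial tail estimate to show that \whp\ every vertex $u$ satisfies $\alpha d \le |V_1(u)| \le \beta d$ for constants $\alpha < 1 < \beta$ depending on $c$ (in the regime $d = \omega(\log n)$ these pinch to $\alpha = \beta = 1$, which is what promotes \eqref{eq:summary-case1-olog} to \eqref{eq:summary-case1-wlog} and \eqref{eq:summary-case2-olog} to \eqref{eq:summary-case2-wlog}). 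The key structural claim, as the paper's Section~2 discussion anticipates, is that the multiplicative fluctuation is essentially confined to this first step: conditioned on $|V_1(u)|$, the later shells grow by a factor $(1\pm o(1))d$ per step until $V_t$ captures a constant fraction of $V$. So I would set up an induction showing $|V_t(u)| = (1\pm o(1))\,|V_1(u)|\,d^{t-1}$ as long as $d^t = o(n)$, i.e.\ up through $t = t^*$, and then analyze the saturation shells $t^*+1$ and (in Case~1) $t^*+2$ separately, where the Poisson-type approximation $\Pr[\text{vertex } w \text{ not reached at distance} \le t] \approx \e^{-\gamma}$ enters.

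With shell sizes controlled, the upper bound follows the argument sketched in the introduction and in~\cite{BollMitPra}: I would show that for any fixed pair $u,v$, the probability that a uniformly random landmark $w$ fails to separate them is at most $q(\alpha,\beta)$ (respectively $1 - \alpha\gamma/d - \e^{-\beta\gamma}$ in Case~2), where the relevant quantity is the expected overlap of the distance profiles of $u$ and $v$. Choosing $|W| = (1+o(1))\,\tfrac{2\log n}{-\log q(\alpha,\beta)}$ landmarks independently at random makes the failure probability for a single pair at most $q(\alpha,\beta)^{|W|} = n^{-2-o(1)}$, and a union bound over the $\binom{n}{2}$ pairs shows \whp\ that $W$ separates everything. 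The factor of $2$ and the form of $q(\alpha,\beta)$ come from the two independent Poisson-reachability events at the worst-case shell, using the extreme degrees $\alpha d$ and $\beta d$ to bound the probability that a landmark lands in $V_t(u)\cap V_t(v)$ from both sides.

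For the lower bound I would deploy the information-theoretic argument that is this paper's main novelty over~\cite{BollMitPra}. The observation is that a separator $W$ of size $k$ assigns to each vertex $v$ the distance vector $\big(d(w,v)\big)_{w\in W}$, and these $n$ vectors must be distinct, so the entropy of the random distance vector of a uniformly chosen vertex must be at least $\log n$. Since distances from distinct landmarks are not independent, I would bound this joint entropy by the sum of per-landmark entropies $k \cdot \max_w H(\text{distance from } w)$ — here subadditivity of entropy is exactly the right tool and avoids the second-moment machinery of Suen's inequality. Evaluating the single-landmark distance entropy via the shell sizes gives the denominators in \eqref{eq:summary-case1-olog} and \eqref{eq:summary-case2-olog}: in Case~1 the distance distribution concentrates on two values with split governed by $\e^{-x\gamma}$, so the entropy is $\max_{x\in[\alpha,\beta]} H(\e^{-x\gamma})$, while in Case~2 the distribution is dominated by one value with a small tail, giving the $-(\beta\gamma/d)\log(\beta\gamma/d) + \gamma\e^{-\alpha\gamma}$ form.

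I expect the main obstacle to be the two regimes' interaction with the multiplicative degree gap, specifically making the $\max_{x\in[\alpha,\beta]}$ in the Case~1 lower bound rigorous. The subtlety is that the lower bound must hold for the \emph{worst} landmark choice an adversary could face, yet the entropy is maximized at the degree value most favorable to the separator; one must argue that a constant fraction of vertices really do have degree near the entropy-maximizing value $x^* \in [\alpha,\beta]$, so that the per-landmark entropy genuinely attains $\max_x H(\e^{-x\gamma})$ rather than some smaller typical value. Reconciling this requires the degree concentration of the first step to be not merely an interval bound but a statement about the \emph{distribution} of first-shell sizes across vertices, which is where the sharper binomial estimate does double duty. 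A secondary difficulty is controlling the $o(1)$ error terms uniformly across the zig-zag as $\gamma$ ranges over $\Theta(1)$ in Case~1 and how the Case~1 and Case~2 thresholds meet as $\gamma$ crosses between $\Theta(1)$ and $\omega(1)$.
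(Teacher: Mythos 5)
Your proposal follows essentially the same route as the paper: a sharpened binomial tail bound for the degrees (the paper's Lemma~\ref{lem:chernoff-1} and Corollary~\ref{cor:err_prob_deg}), an induction showing the fluctuations are confined to the first shell (Lemma~\ref{lem:expand}), a separate analysis of the saturation shells (Lemma~\ref{lem:last-few}), random landmarks plus a union bound for the upper bound (Lemmas~\ref{lem:random-sep} and~\ref{lem:separators}), and subadditivity of entropy applied to the empirical distance distribution for the lower bound (Lemma~\ref{lem:entropy} and Theorem~\ref{thm:main-lower}). The one substantive correction concerns the ``main obstacle'' you identify in your last paragraph: it is not an obstacle, because you have the direction of the required inequality reversed. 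The entropic argument gives $\MD(G) \ge (\log n)/\Hmax$ for \emph{any} $\Hmax$ that upper-bounds $H_w$ for every vertex $w$; you never need any vertex to \emph{attain} the entropy-maximizing value $x^*$. The quantity $\max_{x\in[\alpha,\beta]} H(\e^{-x\gamma})$ enters the denominator of~\eqref{eq:summary-case1-olog} precisely as a pessimistic upper bound on $H_w$ (each $w$ has $|V_{t^*+1}(w)|/n = 1-\e^{-x\gamma} \pm o(1)$ for \emph{some} $x\in[\alpha,\beta]$, hence $H_w \le \max_x H(\e^{-x\gamma}) + o(1)$), and if no vertex came close to attaining the maximum the stated lower bound would still be valid, merely not tight. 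So the distributional statement about first-shell sizes that you propose to prove is unnecessary; the interval bound $\alpha d \le \deg w \le \beta d$ for all $w$ already suffices, which is exactly how the paper proceeds. The only genuine ingredient your sketch leaves implicit is the upper bound's need for $V_{t^*}(u)$ and $V_{t^*}(v)$ to be nearly disjoint (so the symmetric difference is close to the union), which the paper gets from the fact that \whp\ no two vertices share three common neighbors (Lemma~\ref{lem:from-a-pair}); you should make sure that step appears when you write out the separator-counting argument.
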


We prove Theorem~\ref{thm:main-combined} below, breaking it into Theorem~\ref{thm:main-lower} and Theorem~\ref{thm:main-upper} for the lower and upper bounds respectively. 

We can summarize these bounds as follows. First, our upper bounds on $\MD(G)$ match those in~\cite{BollMitPra}, with the complication that when $d=O(\log n)$ we have to introduce the constants $\alpha$ and $\beta$ to handle fluctuations in vertex degrees. 
Our lower bounds are somewhat looser than those of~\cite{BollMitPra}, but have the advantage that they apply when $d \ge c \log n$ as opposed to $d \ge \log^5 n$. 

\smallskip \textbf{Case 1.} When $\gamma=\Theta(1)$, our upper and lower bounds differ by a multiplicative constant depending on $\gamma$, and \whp\ 
\[
\MD(G) = \Theta(\log n) \, .
\]
When $d=c \log n$ with $c=O(1)$ this gap also depends on $c$, since it depends on $\alpha$ and $\beta$. But since $H(y) \le \log 2$ for all $y$, we always have \whp
\[
\MD(G) \ge (1-o(1)) \log_2 n 
\, .
\]

\smallskip \textbf{Case 2.} Since $\gamma=o(d)$ and $\gamma=\omega(1)$, the denominators in~\eqref{eq:summary-case2-olog} and~\eqref{eq:summary-case2-wlog} are all $o(1)$, so \whp\ $\MD(G) = \omega(\log n)$. However, in this case our upper and lower bounds are more than a constant apart. Moreover, the scaling of $\MD(G)$ with $n$ depends on which term in the denominators of our bounds is larger, and this in turn depends on how $\gamma$ grows with $n$. The crossover occurs roughly when $\gamma=\log d$.

First suppose $d=\omega(\log n)$, and recall that $d \le \log^5 n$ so $\log d = O(\log \log n)$. Then~\eqref{eq:summary-case2-wlog} gives the following.
\begin{itemize}
\item If $d=\omega(\log n)$ and $\gamma \ge \log d$, then \whp
\[
(1-o(1)) \frac{\log n}
{-(\gamma/d) \log (\gamma/d)}
\le \MD(G) \le
(1+o(1)) \frac{\log n}
{\gamma/d} \, ,
\]
and so
\[
\Omega\!\left(
\frac{d \log n}{\gamma \log \log n} 
\right)
\le \MD(G) \le
O\!\left( \frac{d \log n}{\gamma}
\right) \, .
\]
\item If 
$d=\omega(\log n)$ and $\gamma \le \delta \log d$ for some $\delta < 1$, then \whp
\[
(1-o(1)) \frac{\log n}
{\gamma \e^{-\gamma}}
\le \MD(G) \le
(1+o(1)) \frac{\log n}
{\e^{-\gamma}} \, .
\]
In particular, if $\gamma= \delta \log d$ then
\[
\Omega\!\left(
\frac{d^\delta \log n}{\log \log n} 
\right)
\le \MD(G) \le
O(d^\delta \log n) \, .
\]
\item Combining these, if $d=\omega(\log n)$ the multiplicative gap between our upper and lower bounds is $O(\min(\log \log n,\gamma))$. 
\end{itemize}

Finally, consider Case 2 when $d=c \log n$ for some constant $c > 1$. Here the situation is complicated by the fact that $\alpha < 1 < \beta$, and that these appear in the exponents $\e^{-\alpha \gamma}$ and $\e^{-\beta\gamma}$. This creates a pair of crossovers with a range of $\gamma$ in between. From~\eqref{eq:summary-case2-olog} we have the following.

\begin{itemize}
\item 
If $d=\Theta(\log n)$ and $\gamma \ge (1/\alpha) \log d$, then \whp\ 
\[
(1-o(1)) 
\frac{\log n}
{-(\beta \gamma/d) \log(\beta \gamma/d)} 
\le \MD(G) \le
(1+o(1)) 
\frac{\log n}
{\alpha \gamma/d} 
\]
and so
\[
\Omega\!\left(
\frac{\log^2 n}{\gamma \log \log n} 
\right) 
\le \MD(G) \le
O\!\left( 
\frac{\log^2 n}{\gamma} \right) \, . 
\]
\item If $d=\Theta(\log n)$ and $\gamma = \delta \log d$ for $1/\beta \le \delta < 1/\alpha$, then \whp\ 
\[
 (1-o(1)) \,\frac{\log n}{\gamma \e^{-\alpha \gamma}} 
\le \MD(D) \le 
(1+o(1)) \,\frac{\log n}{\alpha\gamma/d} \, ,
\]
and so
\begin{equation}
\label{eq:widest}
\Omega\!\left(
\frac{\log^{1+\alpha\delta} n}{\log \log n} 
\right) 
\le \MD(G) \le
O\!\left( 
\frac{\log^2 n}{\log \log n} \right) \, . 
\end{equation}
\item If $d=\Theta(\log n)$ and $\gamma = \delta \log d$ for $\delta < 1/\beta$, then \whp\ 
\[
 (1-o(1)) \,\frac{\log n}{\gamma \e^{-\alpha \gamma}} 
\le \MD(D) \le 
(1+o(1)) \,\frac{\log n}{\e^{-\beta\gamma}} \, ,
\]
and so
\begin{equation}
\label{eq:widest2}
\Omega\!\left(
\frac{\log^{1+\alpha\delta} n}{\log \log n} 
\right) 
\le \MD(G) \le
O\!\left( 
\frac{\log^{1+\beta\delta} n}{\log \log n} \right) \, . 
\end{equation}
\end{itemize}

The multiplicative gap between our bounds is most severe in~\eqref{eq:widest} and~\eqref{eq:widest2}, where we remain uncertain about the power of $\log n$. The source of this gap is that if $d=c \log n$, then there typically exist vertices with degrees ranging from $\alpha d$ to $\beta d$ for some $\alpha < 1 < \beta$ depending on $c$. As a result, different pairs of vertices really do have different numbers of separators that distinguish them, and different vertices have different distributions of distances to the rest of the graph. This doesn't necessarily mean that $\MD(G)$ fluctuates from one realization of $G(n,d/n)$ to another, but it does make it harder to prove that it is concentrated around some typical value.

\section{An entropic lower bound on metric dimension}

We begin by proving a general lower bound on the width of a matrix whose rows are distinct and whose columns have bounded entropy. Note that this lemma applies to arbitrary deterministic matrices, although we will apply it below to a random matrix.

When the base matters, our logarithms are natural.

\begin{lemma}
\label{lem:entropy}
Let $A$ be a matrix with $n$ rows and $m$ columns with entries in a finite set $S$. For each $1 \le j \le m$ and each 
$s \in S$, let $p_j(s) = |\{i: A_{ij} = s\}| / n$ denote the frequency of $s$ in the $j$th column, and define the entropy
\[
H_j = - \sum_{s \in S} p_j(s) \log p_j(s) \, ,
\]
where $0 \log 0 := 0$. 
Suppose that $H_j \le \Hmax$ for all $1 \le j \le m$ for some $\Hmax$.  
Then if the $n$ rows of $A$ are distinct, we have
\[
m \ge \frac{\log n}{\Hmax} \, .
\]
\end{lemma}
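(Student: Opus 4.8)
The plan is to read the rows of $A$ as samples from a probability distribution and then apply subadditivity of entropy coordinatewise. Specifically, I would let $R = (R_1, \dots, R_m)$ be a row of $A$ chosen uniformly at random among the $n$ rows, viewed as a random vector taking values in $S^m$. The crucial use of the hypothesis that the $n$ rows are distinct is that $R$ is then a uniform random variable over $n$ \emph{distinct} outcomes, so its Shannon entropy is exactly
\[
H(R) = \log n \, .
\]
This is the only place the distinctness assumption is needed, and it is what injects $\log n$ into the bound.

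Next I would identify the marginal of each coordinate. By construction, the distribution of $R_j$ is precisely the column-frequency distribution $p_j(\cdot)$, since $\Pr[R_j = s]$ equals the fraction of rows $i$ with $A_{ij} = s$. Hence $H(R_j) = H_j \le \Hmax$ for every $j$. The final step is subadditivity of entropy,
\[
H(R) \le \sum_{j=1}^m H(R_j) \, ,
\]
which holds for any joint distribution and requires no independence. Chaining these together gives
\[
\log n = H(R) \le \sum_{j=1}^m H(R_j) = \sum_{j=1}^m H_j \le m \, \Hmax \, ,
\]
and rearranging yields $m \ge \log n / \Hmax$, as claimed.

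I do not expect a genuine obstacle here; the argument is short and self-contained. The only point requiring a little care is the first equality $H(R) = \log n$: one should note that distinctness of the rows is exactly what guarantees the $n$ equiprobable outcomes are genuinely distinct points of $S^m$, so the entropy attains the maximal value $\log n$ rather than something smaller. Everything else is a direct invocation of standard entropy inequalities (subadditivity and the identification of coordinate marginals), so no delicate estimates or case analysis are involved.
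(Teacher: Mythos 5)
Your proof is correct and follows essentially the same route as the paper's: a uniformly random row has entropy $\log n$ by distinctness, its coordinate marginals are the column frequency distributions, and subadditivity of entropy closes the argument. No differences worth noting.
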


\begin{remark}
Even though we will consider the case where each $A_{ij}$ is a random variable, namely the shortest path distance in a random graph, the entropy $H_j$ is not the entropy of that random process. Rather, $H_j$ is the entropy of the frequency distribution of the $j$th column of a fixed matrix. In statistical language, it is the entropy of the empirical distribution of that column. 

This entropic argument has the advantage that it does not require any independence in the entries of $A$. A stronger lower bound on the number of columns is proved in~\cite{odor-thiran} in the case where the entries of $A$ are independent Bernoulli variables.
\end{remark}

\begin{proof}
This is a classic information-theoretic proof. While we will phrase it in terms of random variables, the entries $A_{ij}$ of the matrix $A$ are fixed; the only randomness will come from picking a random row $i$.

For each fixed column $j \in \{1,\ldots,m\}$, consider the random variable $A_{ij}$ 
where $i$ is uniformly random in $\{1,\ldots,n\}$. Then $p_j(s)$ is the probability distribution of this variable, i.e., the empirical distribution of $s$ in column $j$, and $H_j$ is its entropy.

Now consider the random variable $\br$ consisting of a randomly chosen row of $A$, namely $\br=(A_{i,1},A_{i,2},\ldots,A_{i,m})$ where $i$ is uniformly random in $\{1,\ldots,n\}$. Let $\Hrow$ be the entropy of the distribution of $\br$. For each $1 \le j \le m$ the marginal distribution of the $j$th component $\br_j=A_{ij}$ is distributed according to $p_j$. Thus by the sub-additivity property of entropy we have
\begin{equation}
\label{eq:hrow1}
\Hrow \le \sum_{j=1}^m H_j \le m \Hmax \, .
\end{equation}
since $\max_j H_j \le \Hmax$ by assumption. If the $n$ rows are distinct, then each row appears with probability $1/n$ and $\Hrow = \log n$. 
Combining this with~\eqref{eq:hrow1} completes the proof.
\end{proof}

We use this entropic argument to metric dimension as follows. Consider a graph $G=(V,E)$, and let $W \subseteq V$ be a set of vertices. Let $n=|V|$ and $m=|W|$. Then let $A$ be the matrix of shortest path distances $A_{v,w} = d(v,w)$, where each row corresponds to some $v \in V$ and each column corresponds to some $w \in W$. If $W$ is a separator, then the $n$ rows of $A$ are distinct. Applying Lemma~\ref{lem:entropy} then yields a lower bound on $m$, and thus on $\MD(G)$.

We use the following notation for the ``shells'' of vertices at a given distance from $w$:

\begin{definition}
\label{definition:shells}
Let $G=(V,E)$ with $|V|=n$ and diameter $\Diam$. For each $w \in V$ and each $0 \le t \le \Diam$, let
\[
V_t(w) = \{ v \in V : d(v,w) = t \} 
\]
be the set of vertices at distance exactly $t$ from $w$. We will also write
\[
V_{\le t}
= \bigcup_{t'=0}^{t} V_{t'} 
\quad \text{and} \quad
V_{>t} 
= V \setminus V_{\le t}
= \bigcup_{t'=t+1}^{\Diam} V_{t'} 
\, . 
\]
\end{definition}

Now we state an immediate corollary of Lemma~\ref{lem:entropy}, which gives a lower bound on $\MD(G)$ for all graphs:

\begin{corollary}
\label{cor:main}
For any graph $G=(V,E)$ with $|V|=n$ and diameter $\Diam$, and for a given $w \in V$, let
\begin{equation}
\label{eq:Hw}
H_w = - \sum_{t=0}^\Diam \frac{|V_t(w)|}{n} \log \frac{|V_t(w)|}{n} \, . 
\end{equation}
Then if $H_w \le \Hmax$ for all $w \in V$, we have
\[
\MD(G) \ge \frac{\log n}{\Hmax} \, .
\]
\end{corollary}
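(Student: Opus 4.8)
The plan is to derive Corollary~\ref{cor:main} as a direct application of Lemma~\ref{lem:entropy}, by constructing the distance matrix and checking that its column entropies are exactly the quantities $H_w$. First I would fix a separator $W \subseteq V$ of minimum size $m = \MD(G)$, and form the matrix $A$ whose rows are indexed by all $v \in V$ and whose columns are indexed by the landmarks $w \in W$, with entries $A_{v,w} = d(v,w)$. Since distances in a connected graph of diameter $\Diam$ take values in the finite set $S = \{0,1,\ldots,\Diam\}$, this matrix satisfies the hypotheses of Lemma~\ref{lem:entropy}.

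The key observation is that the empirical column distribution is precisely the shell profile. For a fixed landmark $w$, the frequency $p_w(t) = |\{v : d(v,w) = t\}|/n$ is exactly $|V_t(w)|/n$ in the notation of Definition~\ref{definition:shells}. Hence the column entropy $H_j$ from Lemma~\ref{lem:entropy}, for the column corresponding to $w$, equals $H_w$ as defined in~\eqref{eq:Hw}. So the hypothesis $H_w \le \Hmax$ for all $w \in V$ implies in particular $H_j \le \Hmax$ for every one of the $m$ columns of $A$.

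Next I would invoke the defining property of a separator: since $W$ separates every pair of distinct vertices, no two vertices $u \ne v$ can have identical distance vectors to all landmarks, so the $n$ rows of $A$ are distinct. This is exactly the remaining hypothesis of Lemma~\ref{lem:entropy}, so the lemma yields $m \ge \log n / \Hmax$, and since $m = \MD(G)$ this gives the claimed bound $\MD(G) \ge \log n / \Hmax$.

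There is essentially no obstacle here: the content is entirely in Lemma~\ref{lem:entropy}, and this corollary is just its translation into graph-theoretic language. The only point requiring a word of care is that the hypothesis is stated as $H_w \le \Hmax$ for \emph{all} $w \in V$, whereas the bound is witnessed by the specific landmarks in $W \subseteq V$; but since the hypothesis holds over all of $V$ it certainly holds over $W$, so the restriction is automatic and no minimization over the choice of separator is needed.
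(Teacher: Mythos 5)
Your proof is correct and follows exactly the route the paper intends: it builds the $n \times m$ distance matrix for a minimum separator $W$, identifies the column entropies with the $H_w$ of~\eqref{eq:Hw}, notes the rows are distinct because $W$ separates all pairs, and applies Lemma~\ref{lem:entropy}. This matches the paper's own (informal) derivation given in the paragraph preceding the corollary, so nothing further is needed.
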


\begin{remark}
This is also a lower bound on the sequential metric dimension $\SMD(G)$ considered in~\cite{odor-thiran}, since the player can gain at most $\Hmax$ bits of information with each query, and they need $\log n$ bits to identify the hidden vertex.
\end{remark}

As a simple application, for any graph $G$ with diameter $\Diam$ we have $\Hmax \le \log |S| = \log (\Diam+1)$ since $S=\{0,1,\ldots,\Diam\}$. Thus, if $G$ has $n$ vertices we have
\begin{equation}
\label{eq:bound-simple}
\MD(G) \ge \frac{\log n}{\log (\Diam+1)} \, .
\end{equation}
We can improve this slightly by focusing on the $(n-m) \times m$ matrix of distances between $V \setminus W$ and $W$. Then $S=\{1,\ldots,\Diam\}$ and 
\begin{equation}
\label{eq:bound-pessimist}
\MD(G) \ge \frac{\log (n-\MD(G))}{\log \Diam} \, .
\end{equation}
This is equivalent to a bound from~\cite{Khuller96},
\begin{equation}
\label{eq:khuller}
n \le \Diam^{\MD(G)} + \MD(G) \, .
\end{equation}

However, the lower bounds~\eqref{eq:bound-simple} and~\eqref{eq:bound-pessimist} are quite pessimistic. We can only have $\Hmax = \log (\Diam+1)$ if, for some $w$, the distribution of distances $d(v,w)$ is uniform between $0$ and $\Diam$. This holds for a path graph of $n$ vertices, where $\Diam = n-1$ and these bounds give $\MD(G) \ge 1$. Indeed, for path graphs we have $\MD(G)=1$ since a single vertex at either end is a separator.

But in an expander, in particular \whp\ in $G(n,p)$, this is far from true: only a small fraction of vertices have distances much less than the diameter. In fact \whp\ for any $w$ the distribution of distances $d(v,w)$ is concentrated on one or two values. (This was already known for $d=\omega(\log n)$; we give a proof here that includes the case $d=O(\log n)$.) In that case we have $\Hmax=O(1)$, improving the lower bound from $\MD(G) \ge (\log n)/(\log \Diam) = \Omega(\log n / \log \log n)$ to $\MD(G) = \Omega(\log n)$.

The following two lemmas overlap with classic results about expansion in $G(n,p)$ (e.g.~\cite{bollobas1981diameter,chung2001diameter}) but we tune them to our needs and give simple self-contained proofs. We use $\deg w$ to denote the degree of a vertex $w$.

\begin{lemma}
\label{lem:expand}
Let $G = G(n,p)$ where $p=d/n$ and $d \ge c \log n$ for some $c > 1$. Define $V_t(w)$ as in Corollary~\ref{cor:main}. Then there are constants $\alpha, \beta > 0$ depending only on $c$ such that the following holds \whp: for all $w \in V$,
\begin{equation}
\label{eq:degree-bound}
(1-o(1)) \,\alpha d \le \deg w \le (1+o(1)) \,\beta d \, ,
\end{equation}
and, for all $1 \le t \le t^*$ with $t^*$ as in Definition~\ref{def:cases}, 
\begin{equation}
\label{eq:Nupper-lower-expand0}
|V_t(w)| = (1 \pm o(1)) \,d^{t-1} \deg w \, .
\end{equation}
Thus \whp\ for all $w \in V$ and all $t \le t^*$, 
\begin{equation}
\label{eq:Nupper-lower-expand}
(1-o(1)) \,\alpha d^t 
\le |V_t(w)| 
\le (1+o(1)) \,\beta d^t \, .
\end{equation}
Moreover, if $c = \omega(1)$, i.e., if $d=\omega(\log n)$, then $\alpha=\beta=1$. 
\end{lemma}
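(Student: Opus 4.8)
The plan is to prove Lemma~\ref{lem:expand} in three stages: first control the degrees via a sharp binomial tail bound, then propagate the shell-size estimate~\eqref{eq:Nupper-lower-expand0} inductively up to $t^*$, and finally combine these to get~\eqref{eq:Nupper-lower-expand}. Throughout, all statements are understood to hold \whp, and the strategy at each step is to prove a uniform bound over all $n$ vertices (or all $\binom{n}{2}$ pairs, etc.) by a first-moment / union-bound argument: show that the probability that any single vertex violates the desired bound is $o(1/n)$, so that a union bound over the $n$ vertices still gives $o(1)$.

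\smallskip
\textbf{Step 1 (degree bounds).} Each $\deg w \sim \mathrm{Bin}(n-1,p)$ with mean $(1-o(1))d$. The key point, emphasized in the introduction, is that when $d = c\log n$ the standard Chernoff bound is not quite tight enough to get a union bound over all $n$ vertices with an \emph{explicit} multiplicative window. I would instead use the sharper large-deviations form of the binomial tail, namely $\Pr[\mathrm{Bin}(N,p) \ge \beta d] \le \exp(-d\,(\beta\log\beta-\beta+1)(1+o(1)))$ and the analogous lower tail, so that $\Pr[\deg w \notin [\alpha d,\beta d]]$ decays like $n^{-c\,I(\cdot)}$ for the rate function $I(x)=x\log x - x + 1$. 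Choosing $\alpha=\alpha(c)<1<\beta=\beta(c)$ so that $c\cdot I(\alpha) > 1$ and $c\cdot I(\beta) > 1$ makes each tail $o(1/n)$, and a union bound over $w$ gives~\eqref{eq:degree-bound}. When $c=\omega(1)$ the same computation lets $\alpha,\beta \to 1$, since any fixed multiplicative window then has tail $n^{-\omega(1)}$; this is where $\alpha=\beta=1$ comes from.

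\smallskip
\textbf{Step 2 (shell growth).} I would prove~\eqref{eq:Nupper-lower-expand0} by induction on $t$. Having exposed $V_{\le t}$, the next shell $V_{t+1}$ consists of vertices outside $V_{\le t}$ having at least one neighbor in $V_t$. Since $|V_{\le t}| = o(n)$ for $t \le t^*$ (by definition of $t^*$ and $\gamma = o(d)$), essentially all of the roughly $|V_t|$ vertices in the current shell each contribute $(1-o(1))d$ new neighbors, and the expected overlap — vertices reached by two distinct current-shell vertices, or edges landing back inside $V_{\le t}$ — is of relative order $|V_t|\,d/n = o(1)$ in this regime. Making this rigorous means controlling two error terms: the downward correction from collisions (pairs of $V_t$-vertices sharing a neighbor, and edges into $V_{\le t}$), which I bound in expectation and then \whp\ via concentration, and the fluctuation of $|V_{t+1}|$ around its conditional mean, for which I would use a Chernoff/Azuma-type bound conditioned on $V_{\le t}$. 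Iterating $|V_{t+1}| = (1\pm o(1))\,d\,|V_t|$ from the base case $|V_1| = \deg w$ yields $|V_t| = (1\pm o(1))\,d^{t-1}\deg w$. Because $t^* = O(\log n/\log d)$ is only polylogarithmically many steps, the accumulated $(1\pm o(1))$ factors multiply to another $(1\pm o(1))$, provided each step's error is $o(1/t^*)$, which I would track explicitly.

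\smallskip
\textbf{Step 3 (combination) and main obstacle.} Substituting the degree bounds from Step~1 into~\eqref{eq:Nupper-lower-expand0} gives~\eqref{eq:Nupper-lower-expand} immediately. \textbf{The hard part} is the uniformity in Step~2: I need~\eqref{eq:Nupper-lower-expand0} to hold simultaneously for all $w$ and all $t \le t^*$, so each inductive concentration estimate must fail with probability $o(1/(n\,t^*))$ to survive the double union bound. In the sparse regime $d=\Theta(\log n)$ the per-step fluctuations are only polynomially small in $n$, so the margin is tight, and the crux is verifying that the collision/overlap corrections and the conditional-concentration deviations can be pushed below this threshold for every $t\le t^*$ at once. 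A secondary subtlety is the conditioning: exposing $V_{\le t}$ edge by edge (BFS from $w$) keeps the edges out of $V_{\le t}$ independent and fresh, which is what justifies treating $|V_{t+1}|$ as a sum of near-independent indicators — I would set up the BFS exposure carefully at the start so that this independence is clean at every level.
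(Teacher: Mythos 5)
Your proposal follows the same route as the paper: the degree bounds come from the binomial large-deviation rate function $f(x)=1-x+x\log x$ (the paper's Lemma~\ref{lem:chernoff-1} is exactly your $I(x)=x\log x-x+1$, with $\alpha,\beta$ chosen so the tails are $o(1/n)$ and $\alpha=\beta=1$ recovered via the Taylor expansion of $f$ near $1$ when $c=\omega(1)$), and the shell sizes are propagated by conditioning on $V_{\le t}$, writing $|V_{t+1}|\sim\Bin(n',p')$, and separately controlling the mean-shift error (collisions and edges back into $V_{\le t}$) and the concentration error, with a union bound over all $w$ and all $t\le t^*$. This is correct in structure and matches the paper's proof.

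One point in your error accounting would fail as literally stated, and it is the only nontrivial bookkeeping in the whole argument. You propose to verify that ``each step's error is $o(1/t^*)$'' so that the accumulated factors multiply to $1\pm o(1)$. This sufficient condition is not achievable in the sparse regime: to get per-step failure probability $o(1/n^3)$ for a binomial with mean $\mu_2=\Theta(d^2)=\Theta(\log^2 n)$ you need $\eps^2\mu_2=\Omega(\log n)$, i.e.\ a relative error $\eps_2=\Omega((\log n)^{-1/2})$ at $t=2$, while $t^*$ can be as large as $\log n/\log\log n$, so $\eps_2\, t^*\to\infty$. The proof still goes through because the per-step errors decay geometrically rather than being uniformly small: the concentration error at level $t$ is $\eps_t=O((\log n)^{-(t-1)/2})$ (the shells grow, so Chernoff tightens), and the mean-shift error is $\delta_t=O((\gamma/d)\,d^{-(t^*-t)})$ (largest at $t=t^*$, where it equals $O(\gamma/d)=o(1)$). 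Hence $\sum_t(\eps_t+\delta_t)=O(\eps_2+\delta_{t^*})=o(1)$ by summing two geometric series, and the product of the correction factors is $1\pm o(1)$. You should replace the ``each error is $o(1/t^*)$'' criterion with this geometric-sum argument; otherwise your explicit tracking in Step~2 would appear to show the approach breaks down when in fact it does not.
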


\noindent We prove this lemma in Section~\ref{sec:proof-expand}.

Now that we know how the shells $V_t$ grow exponentially with $t$, the question is how this process ends in the last few shells. First we use a standard argument to show that the diameter of $G$ is concentrated on a few values.

\begin{lemma}
\label{lem:diameter}
Let $G = G(n,p)$ where $p=d/n$ and $d \ge c \log n$ for some $c > 1$. Let $t^*$ and $\gamma$ be defined as in Definition~\ref{def:cases}. Then \whp\
\begin{align*}
\Diam(G) &\le t^* + 3 \quad
\text{in Case 1 where $\gamma=O(1)$} \\
\Diam(G) &\le t^* + 2 \quad
\text{in Case 2 where $\gamma=\omega(1)$.} 
\end{align*}
\end{lemma}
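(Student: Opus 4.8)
The plan is to bound the diameter by showing that from any starting vertex $w$, the ball $V_{\le t}(w)$ grows fast enough that after a few more steps past $t^*$ it engulfs all of $V$. By Lemma~\ref{lem:expand}, \whp\ for every $w$ we have $|V_{t^*}(w)| = \Theta(d^{t^*})$, and since $\gamma = d^{t^*+1}/n$, this means $|V_{\le t^*}(w)|$ is of order $d^{t^*} = \gamma n / d = o(n)$ (as $\gamma = o(d)$). So at step $t^*$ the ball is still a vanishing fraction of the graph, and I need to control the next one, two, or three shells.

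**The key steps**, in order. First I would establish a one-step expansion estimate that remains valid once the shell is no longer small: conditioning on the set $V_{\le t}(w)$ of already-reached vertices, each unreached vertex joins $V_{t+1}(w)$ independently with probability $1 - (1-p)^{|V_t(w)|} \approx 1 - \e^{-p|V_t(w)|}$. When $|V_t(w)|$ is of order $d^{t^*}$, the exponent $p|V_t(w)|$ is of order $\gamma$, so the fraction of newly reached vertices is roughly $1 - \e^{-\Theta(\gamma)}$. This is exactly where Case~1 and Case~2 diverge. In Case~2, where $\gamma = \omega(1)$, we have $1 - \e^{-\Theta(\gamma)} \to 1$, so a single additional step past $V_{\le t^*}$ captures all but a $\e^{-\omega(1)} = o(1)$ fraction of vertices, and one more step (step $t^*+2$) mops up the rest; hence $\Diam \le t^*+2$. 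In Case~1, where $\gamma = \Theta(1)$, each new shell captures only a constant fraction $1 - \e^{-\Theta(\gamma)} < 1$, so I need to track the exponential shrinkage of the unreached set $V_{>t}(w)$ over the steps $t^*+1, t^*+2, t^*+3$: after step $t^*+1$ a constant fraction remains, after $t^*+2$ a smaller constant fraction (still $\Theta(n)$), and I must show that after step $t^*+3$ the number unreached drops below $1$, giving $\Diam \le t^*+3$.

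**The final technical step** is a union bound: I want the statement to hold for \emph{all} pairs of vertices simultaneously, i.e.\ for all $w$ and all targets, so I need the per-step expansion failure probabilities to be small enough to survive multiplication by $n$ (or $n^2$). This is exactly why the hypothesis $d \ge c\log n$ matters. The cleanest route is to fix $w$, reveal shells sequentially, and at each step bound the probability that the number of newly reached vertices falls below its expectation using a Chernoff-type tail; when the current shell has $\Theta(n)$ vertices the relevant binomial has mean $\Theta(pn) = \Theta(d) = \Omega(\log n)$ per unreached vertex, so the tail is $n^{-\Omega(1)}$ and a union over $w$ closes.

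**The main obstacle** I anticipate is the endgame in Case~1, namely proving that the third extra shell $V_{t^*+3}(w)$ leaves nothing behind. Once $|V_{>t}(w)|$ has shrunk to a constant fraction of $n$, each remaining vertex \emph{fails} to be reached in the next step with probability about $\e^{-p|V_{t}(w)|} = \e^{-\Theta(\gamma)}$, a constant; iterating, the expected number of vertices unreached after step $t^*+3$ is $n \cdot \e^{-\Theta(\gamma)\cdot 3}$, which is still $\Theta(n)$ for fixed $\gamma$ rather than $o(1)$. This naive iteration does \emph{not} immediately give $\Diam \le t^*+3$, so the real work is to observe that once $V_{\le t^*+1}(w)$ already contains a \emph{constant fraction} of $V$ — say $\Omega(n)$ vertices — the exponent $p \cdot |V_{\le t^*+1}(w)| = \Omega(pn) = \Omega(d) = \Omega(\log n)$ is \emph{large}, not constant. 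Thus each of the (at most $n$) remaining vertices is left unreached with probability $\e^{-\Omega(\log n)} = n^{-\Omega(1)}$, and by choosing the implied constant appropriately (via $c > 1$) this is $o(1/n^2)$, so a union bound over remaining vertices and over $w$ forces every vertex to be reached by step $t^*+3$. I would make this quantitative by carefully tracking the point at which the accumulated ball first reaches a constant fraction of $n$ and then arguing that just one or two further steps suffice because the effective expansion probability has become overwhelming.
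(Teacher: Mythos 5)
There is a genuine gap in your endgame, and it bites precisely in the regime this paper cares about, namely $1 < c \le 2$. Your plan grows the ball from a single vertex $w$ and finishes by arguing that once $V_{\le t^*+1}(w)$ contains $\Omega(n)$ vertices, each remaining vertex is missed with probability $\e^{-p\,|V_{t^*+1}(w)|} = \e^{-\Omega(d)} = n^{-\Omega(1)}$, ``and by choosing the implied constant appropriately (via $c>1$) this is $o(1/n^2)$.'' But you cannot choose that constant: the exponent is at most $p \cdot n = d = c\log n$, so the per-vertex failure probability is at least roughly $n^{-c}$, and the union bound over the $\le n$ unreached vertices and the $n$ choices of $w$ costs a factor $n^2$. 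The total failure probability is therefore of order $n^{2-c}$, which is $o(1)$ only when $c>2$. For $1 < c \le 2$ the one-sided argument does not close, in either case (the same issue recurs at every ``mop-up'' step, since a shell of size $\Theta(n)$ can never give a miss probability better than $\e^{-d} = n^{-c}$).

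The paper avoids this by growing balls from \emph{both} endpoints: for a fixed pair $u,v$ it takes shells $V_s(u)$ and $V_t(v)$ with $s+t = t^*+1$ (Case 2) or $s+t=t^*+2$ (Case 1), so that by Lemma~\ref{lem:expand} the product $|V_s(u)|\,|V_t(v)| = \Theta(d^{s+t}) = \omega(n)$. Either the shells intersect, giving $d(u,v) \le s+t$, or the probability that no edge joins them is $(1-p)^{\omega(n)} \le \e^{-\omega(d)} = n^{-\omega(1)}$, which is superpolynomially small for any $c>1$ and hence survives the union bound over all $n^2$ pairs. This two-sided ``meet in the middle'' step is the missing idea; with it, the rest of your setup (the expansion estimates up to $t^*$, which match Lemma~\ref{lem:expand}) is fine. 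If you want to keep a one-sided BFS, you would have to stop it while the frontier is still $o(n)$ but already $\omega(n/d)$ in product with the other side's frontier — which is exactly what the two-sided argument formalizes.
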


\begin{remark}
Since $\Diam(G) \ge t^*$, Lemma~\ref{lem:diameter} shows that the diameter is concentrated on at most four values. Sharper bounds are available for larger $d$~\cite{bollobas1981diameter,chung2001diameter}, and moreover the same proof shows that $\Diam(G) \le t^*+2$ in Case 1 if $\alpha^2 \gamma > 2$, but these bounds are enough for our purposes.
\end{remark}

\begin{proof}
Focusing on Case 2 first, we have $d^{t^*+1} = \gamma n = \omega(n)$. Lemma~\ref{lem:expand} then implies that \whp\ for any pair of vertices $u, v$ and any $1 \le s,t \le t^*$ with $s+t=t^*+1$, 
\[
|V_s(u)| |V_t(v)| 
\ge (1-o(1)) \,\alpha^2 d^{s+t}
= \Theta(d^{t^*+1}) 
= \omega(n) 
\]
If $V_s(u)$ and $V_t(v)$ intersect, then $d(u,v) \le s+t = t^*+1$. If they are disjoint but there is an edge between them, then $d(u,v) \le s+t+1 = t^*+2$. The probability there is no such edge is
\[
(1-p)^{|V_s(u)| |V_t(v)|}
= (1-d/n)^{\omega(n)} 
\le \e^{-\omega(d)} \, .
\]
Since $d > \log n$, this is smaller than $n^{-C}$ for any constant $C$. Taking the union bound over all $n^2$ pairs $u,v$ thus shows that \whp\ $\Diam(G) \le s+t+1 \le t^*+2$, completing the proof for Case 2. 

For Case 1, we use the same argument but with $s+t=t^*+2$, in which case from $d^{s+t} = d^{t^*+2} = \omega(n)$.
\end{proof}

Next, the following lemma shows that in Case 1, two shells occupy almost the entire graph, namely $V_{t^*+1}$ and $V_{t^*+2}$. That is, for all $w$, the distance $d(w,v)$ takes just two different values for almost all $v$. In Case 2, just one shell $V_{t^*+1}$ dominates the graph, so $d(w,v)$ takes just one value for almost all $v$. As a result, the entropy of distances is $O(1)$ and $o(1)$ in Case 1 and Case 2 respectively.

\begin{lemma}
\label{lem:last-few}
Let $G = G(n,p)$ where $p=d/n$ and let $c \log n \le d \le \log^5 n$ with $c > 1$. Let  $V_t(w)$, $t^*$, $\alpha$, $\beta$, and $\gamma$ be as above. 
In Case 1 where $\gamma = \Theta(1)$, \whp\ the following holds for all $w \in V$:
\begin{gather}
1-\e^{-\alpha \gamma} - o(1)
\le \frac{|V_{t^*+1}(w)|}{n}
\le 1-\e^{-\beta \gamma} + o(1) \, , 
\label{eq:case1-tplus1} \\
\e^{-\beta \gamma} - o(1) 
\le \frac{|V_{t^*+2}(w)|}{n} 
\le \e^{-\alpha \gamma} + o(1) \, , 
\label{eq:case1-tplus2} \\
\frac{|V_{t^*+1}(w)|}{n} + \frac{|V_{t^*+2}(w)|}{n} = 1-o(1) \, . 
\label{eq:case1-rest}
\end{gather}
In Case 2 where $\gamma = \omega(1)$, \whp\ for all $w \in V$:
\begin{gather}
\frac{|V_{t^*+1}(w)|}{n}
\ge 1 - (1+o(1))
\left( \frac{\beta\gamma}{d} 
+ \e^{-\alpha \gamma} \right) 
\, .
\label{eq:case2-tplus1}
\end{gather}
\end{lemma}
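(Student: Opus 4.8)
The plan is to reveal the shells around a fixed $w$ one layer at a time by breadth-first search, exploiting the standard fact that once the ball $V_{\le t}(w)$ has been exposed, the edges from $V_t(w)$ to the unexplored set $V \setminus V_{\le t}(w)$ are still independent $\Bin(1,p)$ variables. Throughout I condition on the high-probability event of Lemma~\ref{lem:expand}, which already supplies, uniformly over all $w$, that $|V_{\le t^*}(w)| = O(d^{t^*}) = o(n)$ and that $|V_{t^*}(w)| = (1 \pm o(1)) \, d^{t^*-1} \deg w = (1 \pm o(1)) \, x\, d^{t^*}$, where I write $\deg w = x d$ with $x \in [(1-o(1))\alpha, (1+o(1))\beta]$. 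I then analyze the final one or two shells using the fresh randomness of the not-yet-exposed edges.

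First I would handle shell $t^*+1$, which covers Case 2 entirely and the first estimate of Case 1. Conditioned on $V_{\le t^*}(w)$, each of the $(1-o(1))n$ vertices in $U = V \setminus V_{\le t^*}(w)$ lies in $V_{t^*+1}(w)$ independently with probability $1 - (1-p)^{|V_{t^*}(w)|}$. Since $pd^{t^*} = \gamma$ gives $p\,|V_{t^*}(w)| = (1\pm o(1)) x\gamma$ and $p = o(1)$, I get $(1-p)^{|V_{t^*}(w)|} = \e^{-(1\pm o(1)) x\gamma}$, so the conditional mean of $|V_{t^*+1}(w)|/n$ equals $(1-o(1))\bigl(1 - \e^{-(1\pm o(1))x\gamma}\bigr)$. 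Taking the extreme degrees $x=\alpha$ and $x=\beta$ yields the two sides of~\eqref{eq:case1-tplus1} in Case 1, and in Case 2 it yields $|V_{>t^*+1}(w)|/n \le (1+o(1))\,\e^{-\alpha\gamma}$ for the vertices of $U$ with no neighbor in $V_{t^*}(w)$. Combined with the geometric-sum bound $|V_{\le t^*}(w)|/n \le (1+o(1))\beta\,d^{t^*}/n = (1+o(1))\beta\gamma/d$, this gives~\eqref{eq:case2-tplus1}. Because $|V_{t^*+1}(w)|$ is a sum of independent indicators, a Chernoff bound provides concentration about its conditional mean, and I union bound over the $n$ choices of $w$.

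Next I would expose $V_{t^*+1}(w)$ and treat shell $t^*+2$ in Case 1, where $|V_{t^*+1}(w)| = \Theta(n)$. Conditioned on $V_{\le t^*+1}(w)$, a vertex in $R = V \setminus V_{\le t^*+1}(w)$ lies in $V_{t^*+2}(w)$ iff it has a neighbor in $V_{t^*+1}(w)$, and it fails to do so with probability $q_0 = (1-p)^{|V_{t^*+1}(w)|} \le \e^{-p\,|V_{t^*+1}(w)|} \le n^{-c(1-\e^{-\alpha\gamma})(1-o(1))}$, using $d \ge c\log n$. Thus almost all of $R$ joins $V_{t^*+2}(w)$, which gives~\eqref{eq:case1-tplus2} after again inserting $x \in [\alpha,\beta]$, while $|V_{>t^*+2}(w)| \sim \Bin(|R|, q_0)$ has conditional mean $o(n)$, yielding~\eqref{eq:case1-rest}.

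The main obstacle is obtaining all of this uniformly over $w$ when the relevant conditional mean ranges from polynomially large down to $o(1)$. When the mean is large I use a multiplicative Chernoff bound, whose tail is super-polynomially small and survives the union bound over the $n$ vertices; when the mean is below $1$ (for instance when $\e^{-\alpha\gamma}$ or $q_0$ drops below $1/n$) the Chernoff bound is vacuous, so I instead invoke Markov's inequality to conclude that the corresponding set is empty \whp. A secondary subtlety specific to Case 2 is that $\alpha\gamma$ may grow with $n$, so the generic $1\pm o(1)$ factor in $|V_{t^*}(w)|$ must be controlled quantitatively, namely as a relative error $o(1/\gamma)$, to keep $\e^{-(1\pm o(1))\alpha\gamma} = (1+o(1))\,\e^{-\alpha\gamma}$; this requires reading the error term of Lemma~\ref{lem:expand} rather than treating it as a black box. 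Finally, I do not attempt to force $V_{>t^*+2}(w)$ to be empty in Case 1, which would require $c(1-\e^{-\alpha\gamma}) > 2$ in the spirit of the $\Diam(G) \le t^*+3$ bound of Lemma~\ref{lem:diameter}; I only need $|V_{>t^*+2}(w)| = o(n)$, which is exactly what~\eqref{eq:case1-rest} asks for.
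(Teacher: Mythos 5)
Your proposal follows essentially the same route as the paper: condition on the expansion event of Lemma~\ref{lem:expand}, observe that given $V_{\le t^*}(w)$ the next shell is $\Bin\bigl(n',\,1-(1-p)^{|V_{t^*}(w)|}\bigr)$ with $n'=(1-o(1))\,n$, extract the two extremes from $\deg w$ ranging over $[(1-o(1))\alpha d,\,(1+o(1))\beta d]$, and then expose one more layer to control $V_{t^*+2}$ in Case 1 via $p\,|V_{t^*+1}|=\Omega(d)=\Omega(\log n)$. All of this matches the paper's proof, and your remark that in Case 2 the relative error in $|V_{t^*}(w)|$ must be $o(1/\gamma)$ before one may write $\e^{-(1\pm o(1))x\gamma}=(1+o(1))\,\e^{-x\gamma}$ is a legitimate subtlety that the paper itself glosses over.

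The one genuine hole is the uniformity over $w$ in Case 2 when the conditional mean $\mu$ of $|V_{t^*+2}(w)|$ is small. Your dichotomy is: mean large, use multiplicative Chernoff; mean below $1$, use Markov to conclude emptiness. Markov gives only $\Pr[|V_{t^*+2}(w)|\ge 1]\le \mu$, which is not $o(1/n)$ unless $\mu=o(1/n)$, so it does not survive the union bound over the $n$ choices of $w$; and the intermediate regime, say $1\le\mu\le\log n$, is covered by neither tool, since a $(1+o(1))$-multiplicative Chernoff bound at confidence $1-o(1/n)$ requires $\mu=\omega(\log n)$. The paper's fix is different and is what you should adopt: in that regime it does not insist on $|V_{t^*+2}(w)|\le(1+o(1))\,\e^{-\alpha\gamma}n$ at all, but instead uses an upper-tail Chernoff bound to get $|V_{t^*+2}(w)|\le\mathrm{poly}(\log n)\cdot n/\log^{5}n$ (say) with probability $1-o(1/n)$, and then absorbs this into the \emph{other} error term: since $\gamma=\omega(1)$ and $d\le\log^5 n$ give $\beta\gamma n/d=\omega(n/\log^5 n)$, any such contribution is $o(1)\cdot(\beta\gamma/d)\,n$ and disappears into the $(1+o(1))$ of~\eqref{eq:case2-tplus1}. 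With that substitution your argument goes through.
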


\begin{remark}
We restrict our statement to $d \le \log^5 n$ for convenience and because when $d \ge \log^5 n$ the results of~\cite{BollMitPra} apply. We claim that in fact our proof holds for any $d = O(n/\log n)$.
\end{remark}

\begin{proof}
We write $V_t$ for $V_t(w)$ for simplicity. 
Recall that $V_{\le t^*}$ is the union of all the shells whose sizes are upper and lower bounded by Lemma~\ref{lem:expand}. Since $|V_t|$ grows by a factor of $\Theta(d)$ at each step, $|V_{\le t^*}|$ is a geometric sum dominated by its largest term $|V_{t^*}|$. Thus
\begin{equation}
\label{eq:geometric}
|V_{\le t^*}|
= \sum_{t=0}^{t^*} |V_t|
= (1+O(1/d)) \,|V_{t^*}|
= O(d^{t^*}\!/n)
= o(1)
\, .
\end{equation}

Now let $n'$ be the number of vertices outside these shells,
\begin{equation}
\label{eq:n-prime}
n' 
= |V_{>t^*}|
= n - |V_{\le t^*}|
= (1-o(1)) n
\, . 
\end{equation}
Conditioned on $n'$ and $|V_{t^*}|$, we have 
\begin{equation}
\label{eq:tplus1}
|V_{t^*+1}| \sim \Bin(n', p')  
\end{equation}
where $p'$ is the probability a given vertex in $V_{>t^*}$ is a neighbor of at least one vertex in $V_{t^*}$. That is,
\begin{equation}
\label{eq:p-prime}
p' = 1-(1-p)^{|V_{t^*}|} 
= 1 - \e^{-p |V_{t^*}|} + O(p^2 |V_{t^*}|) \, ,
\end{equation}
where we used the Taylor series. We can rewrite~\eqref{eq:Nupper-lower-expand} for $t=t^*$ as
\begin{equation}
\label{eq:rewrite}
(1-o(1)) \,\alpha \gamma 
\le p |V_{t^*}| 
\le (1+o(1)) \,\beta \gamma \, .
\end{equation}

\textbf{Case 1.} 
If $\gamma = \Theta(1)$, then Eqs.~\eqref{eq:n-prime}, \eqref{eq:p-prime}, \eqref{eq:rewrite}, and $p=o(1)$ tell us that $p'$ is bounded inside the unit interval,
\[
1 - \e^{-\alpha \gamma} - o(1) 
\le p' 
\le 1 - \e^{-\beta \gamma} + o(1) \, .
\]
Since $|V_{t^*+1}|$ is binomial, the standard Chernoff bound implies that it is concentrated around its mean $n'p' = (1-o(1)) np'$, so \whp\
\begin{equation}
\label{eq:thing}
1-\e^{-\alpha \gamma} - o(1)
\le \frac{|V_{t^*+1}|}{n}
\le 1-\e^{-\beta \gamma} + o(1) \, ,
\end{equation}
so that $V_{t^*+1}$ contains a positive fraction of the vertices. The probability that a vertex in the rest of the graph does not have a neighbor in $V_{t^*+1}$ is then 
\[
(1-p)^{|V_{t^*+1}|} \le \e^{-p |V_{t^*+1}|} 
= \e^{-\Omega(d)} = o(1) \, ,
\]
so \whp\ $|V_{> t^*+2}| = o(n)$. Thus we have
\[
\frac{|V_{t^*+1}|}{n} + \frac{|V_{t^*+2}|}{n} 
= 1 - \frac{|V_{\le t^*}|}{n} - \frac{|V_{> t^*+2}|}{n}
= 1 - o(1) \, ,
\]
which with~\eqref{eq:thing} gives
\[
\e^{-\beta \gamma} - o(1) 
\le \frac{|V_{t^*+2}|}{n} 
\le \e^{-\alpha \gamma} + o(1) \, .
\]
This completes the proof for Case 1.

\textbf{Case 2.} If $\gamma = \omega(1)$, then~\eqref{eq:p-prime} and~\eqref{eq:rewrite} give
\[
p' \ge 1 - \e^{-\alpha \gamma} 
= 1-o(1) \, , 
\]
so $V_{t^*+1}$ contains almost all the remaining $n'$ vertices outside $V_{\le t^*}$. The number of vertices in the remainder $V_{> t^*+1}$, which by Lemma~\ref{lem:diameter} is \whp\ just $V_{t^*+2}$, is distributed as $\Bin(n',1-p')$ and has expectation at most 
\[
n'(1-p') \le n (1-p') \le n \e^{-\alpha \gamma} \, . 
\]

However, to apply the union bound over all $w$ we need a bound on $|V_{t^*+2}|$ that holds with probability $1-o(1/n)$. Since $\gamma = \omega(1)$, we have $\gamma/d = \omega(1/d) = \omega(1/\log^5 n)$. If $\e^{-\alpha \gamma} = \Omega(1/\log^5 n)$, then standard Chernoff bounds imply that $|V_{t^*+2}|/n \le (1+o(1)) \,\e^{-\alpha \gamma}$ with probability $1-o(1/n)$. Alternatively, if $\e^{-\alpha \gamma} = o(1/\log^5 n)$, Chernoff bounds let us absorb  $|V_{t^*+2}|/n$ into the error term $o(1)\,(\beta\gamma/d)$ in~\eqref{eq:case2-tplus1} with the same probability. Either way we have
\[
\frac{|V_{\le t^*}|+|V_{t^*+2}|}{n} \le (1+o(1)) \left( \frac{\beta\gamma}{d} + \e^{-\alpha \gamma} \right) \, ,
\]
which completes the proof of~\eqref{eq:case2-tplus1}.
\end{proof}

Finally, we combine Lemma~\ref{lem:last-few} with the entropic bound of Lemma~\ref{lem:entropy} to prove our lower bounds:

\begin{theorem}
\label{thm:main-lower}
Let $G = G(n,p)$ where $p=d/n$ and $c \log n \le d \le \log^5 n$ for some $c > 1$, and let $\alpha$, $\beta$, and $\gamma$ be as above. In Case 1 where $\gamma=\Theta(1)$, \whp\ 
\begin{align}
\text{if $d=O(\log n)$,}\quad \MD(G) &\ge
 (1-o(1)) \,\frac{\log n}{\max_{x \in [\alpha,\beta]} H(\e^{-x\gamma})} 
\label{eq:lower-case1-olog} \\
\text{if $d=\omega(\log n)$,}\quad \MD(G) &\ge (1-o(1)) \frac{\log n}{H(\e^{-\gamma})} \, ,
\label{eq:lower-case1-omegalog}
\end{align}
where $H(y) = -y \log y -(1-y) \log (1-y)$ is the binary entropy function. 
In Case 2 where $\gamma=\omega(1)$, \whp
\begin{align}
\text{if $d=O(\log n)$,}\quad \MD(G) &\ge 
(1-o(1)) \,\frac{\log n}{-(\beta \gamma/d) \log (\beta \gamma/d) 
+ \gamma \e^{-\alpha \gamma}} 
\label{eq:lower-case2-olog} \\
\text{if $d=\omega(\log n)$,}\quad \MD(G) &\ge (1-o(1)) 
\,\frac{\log n}{-(\gamma/d) \log (\gamma/d) 
+ \gamma \e^{-\gamma}} 
\, . 
\label{eq:lower-case2-omegalog}
\end{align}
\end{theorem}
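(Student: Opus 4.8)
The plan is to apply Corollary~\ref{cor:main}, which reduces everything to producing a uniform upper bound $\Hmax$ on the distance-distribution entropy $H_w$ from~\eqref{eq:Hw}, valid \whp\ for every $w \in V$ simultaneously; the conclusion will then be $\MD(G) \ge (1-o(1))\,(\log n)/\Hmax$ with $\Hmax$ equal to the denominator claimed in each case. Since Lemmas~\ref{lem:expand}, \ref{lem:diameter}, and~\ref{lem:last-few} already hold \whp\ for all $w$ at once, I may freely use their conclusions for every $w$ and need not re-run any union bound.

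\textbf{Case 1.} By~\eqref{eq:case1-tplus1}--\eqref{eq:case1-rest}, for each $w$ the distribution $\{|V_t(w)|/n\}_t$ is, up to $o(1)$ total mass, a two-point distribution on $V_{t^*+1}$ and $V_{t^*+2}$, with $p_2 := |V_{t^*+2}(w)|/n \in [\e^{-\beta\gamma}-o(1),\, \e^{-\alpha\gamma}+o(1)]$ and $p_1 := |V_{t^*+1}(w)|/n = 1-p_2-o(1)$. I would write $H_w = -p_1\log p_1 - p_2\log p_2 + E_w$, where $E_w$ collects the shells $V_{\le t^*}$ and $V_{>t^*+2}$, and bound $E_w = o(1)$: the low shells carry mass $O(\gamma/d)=O(1/\log n)$ and, growing geometrically by Lemma~\ref{lem:expand}, contribute $(1+o(1))$ times their top term $-(|V_{t^*}|/n)\log(|V_{t^*}|/n) = O((\log\log n)/\log n)$, while $V_{>t^*+2}$ carries mass $\e^{-\Omega(d)}=n^{-\Omega(1)}$. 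Replacing $p_1$ by $1-p_2$ at a further $o(1)$ cost gives $H_w \le H(p_2)+o(1)$. As $p_2$ ranges over $[\e^{-\beta\gamma},\e^{-\alpha\gamma}]$, which is exactly the image of $[\alpha,\beta]$ under $x\mapsto \e^{-x\gamma}$, continuity of $H$ yields $H(p_2) \le \max_{x\in[\alpha,\beta]} H(\e^{-x\gamma})$, so $\Hmax = \max_{x\in[\alpha,\beta]} H(\e^{-x\gamma})+o(1)$ and we obtain~\eqref{eq:lower-case1-olog}. When $d=\omega(\log n)$, Lemma~\ref{lem:expand} gives $\alpha=\beta=1$ and the maximum collapses to $H(\e^{-\gamma})$, giving~\eqref{eq:lower-case1-omegalog}.

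\textbf{Case 2.} Now~\eqref{eq:case2-tplus1} says the single shell $V_{t^*+1}$ carries mass $1-\delta$ with $\delta \le (1+o(1))(\beta\gamma/d + \e^{-\alpha\gamma}) = o(1)$, and by Lemma~\ref{lem:diameter} the only shell beyond it is $V_{t^*+2}$. I would split $H_w$ into three pieces: the dominant-shell term $-(1-\delta)\log(1-\delta) = (1+o(1))\delta$; the low-distance term $\sum_{t\le t^*} -(|V_t|/n)\log(|V_t|/n)$, which by geometric growth equals $(1+o(1))$ times its top term and hence is at most $(1+o(1))\,(-(\beta\gamma/d)\log(\beta\gamma/d))$, using $|V_{t^*}|/n \le (1+o(1))\beta\gamma/d < 1/\e$ and monotonicity of $-x\log x$ there; and the high-distance term $-(|V_{t^*+2}|/n)\log(|V_{t^*+2}|/n) \le \alpha\gamma\,\e^{-\alpha\gamma} \le \gamma\,\e^{-\alpha\gamma}$, using $|V_{t^*+2}|/n \le \e^{-\alpha\gamma}$. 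Then I would verify the first piece is lower order: $\beta\gamma/d = o(-(\beta\gamma/d)\log(\beta\gamma/d))$ since $\gamma/d=o(1)$, and $\e^{-\alpha\gamma}=o(\gamma\,\e^{-\alpha\gamma})$ since $\gamma=\omega(1)$. This yields $\Hmax = (1+o(1))(-(\beta\gamma/d)\log(\beta\gamma/d)+\gamma\,\e^{-\alpha\gamma})$ and hence~\eqref{eq:lower-case2-olog}; setting $\alpha=\beta=1$ when $d=\omega(\log n)$ gives~\eqref{eq:lower-case2-omegalog}.

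\textbf{Main obstacle.} The Chernoff and union-bound facts are already packaged into the earlier lemmas, so the delicate step is controlling the entropy of the small ``defect'' mass. I expect the real work to be confirming that the geometric sum of the low-shell entropies is genuinely dominated, to within a factor $1+o(1)$, by its largest term $-(|V_{t^*}|/n)\log(|V_{t^*}|/n)$, rather than accumulating an extra logarithmic factor from the $\Theta(\log n/\log\log n)$ shells, and that the elementary contributions $\beta\gamma/d$ and $\e^{-\alpha\gamma}$ are strictly lower order than $-(\beta\gamma/d)\log(\beta\gamma/d)$ and $\gamma\,\e^{-\alpha\gamma}$. These estimates are precisely what pin down the exact denominators in the theorem.
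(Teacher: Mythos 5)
Your overall route is the same as the paper's: reduce to a uniform bound $H_w \le \Hmax$ via Corollary~\ref{cor:main}, bound the shells $V_{\le t^*}$ by a geometric sum dominated by its top term $-(\beta\gamma/d)\log(\beta\gamma/d)$, use the two-point structure of Lemma~\ref{lem:last-few} in Case 1, and separate the dominant shell from the defect mass in Case 2. Case 1 and the order-of-magnitude comparisons at the end of Case 2 are handled correctly and match the paper.

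There is one concrete gap in Case 2. You assert that $|V_{t^*+2}(w)|/n \le \e^{-\alpha\gamma}$ \whp\ for all $w$ and claim this comes for free from the packaged lemmas, with ``no need to re-run any union bound.'' But the stated conclusion of Lemma~\ref{lem:last-few} in Case 2 is only the lower bound~\eqref{eq:case2-tplus1} on $|V_{t^*+1}|$; the best it gives you on the remaining shell is $|V_{t^*+2}|/n \le (1+o(1))\,(\beta\gamma/d + \e^{-\alpha\gamma})$. In the regime where $\e^{-\alpha\gamma} = o(\gamma/d)$, plugging this into $-x\log x$ contributes another $(1+o(1))\,(-(\beta\gamma/d)\log(\beta\gamma/d))$ on top of the low-shell term, doubling that part of $\Hmax$ and halving your lower bound on $\MD(G)$ there. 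The paper avoids this by going back inside the proof of Lemma~\ref{lem:last-few}: conditionally $|V_{>t^*+1}| \sim \Bin(n',1-p')$ with $1-p' \le \e^{-\alpha\gamma}$, and a two-case Chernoff argument (either the mean is large enough that $|V_{t^*+2}|/n \le (1+o(1))\,\e^{-\alpha\gamma}$ with probability $1-o(1/n)$, or $|V_{t^*+2}|/n = o(1/d)$ and its entropy contribution is absorbed into the $\gamma/d$ error term) followed by a union bound over $w$. You need to re-run exactly this step to justify the bound $\alpha\gamma\,\e^{-\alpha\gamma}$ on the $V_{t^*+2}$ contribution; otherwise the constant in~\eqref{eq:lower-case2-olog} degrades.
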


\begin{proof}
Given Lemma~\ref{lem:entropy}, we simply have to show that \whp\  the entropy $H_w$ of distances for each $w$ is less than some $\Hmax$ for all $w$. The denominators in \eqref{eq:lower-case1-olog}--\eqref{eq:lower-case2-omegalog} are then essentially $\Hmax$ in each case. 

Recall that 
\[
H_w = - \sum_{t=0}^\Diam \frac{|V_t(w)|}{n} \log \frac{|V_t(w)|}{n} \, . 
\]
We first bound the contribution to $H_w$ from the shells $V_t$ for $t \le t^*$, each of which occupies $o(n)$ vertices. In both cases this contribution is $o(1)$, but in Case 2 it contributes importantly to the denominator. Since $-x \log x$ is monotonically increasing for $x < 1/\e$, Lemma~\ref{lem:expand} gives
\begin{align}
- \sum_{t=0}^{t^*} 
\frac{|V_t(w)|}{n} \log \frac{|V_t(w)|}{n} 
& \le
- (1+o(1)) \sum_{t=0}^{t^*} 
\frac{\beta d^t}{n} \log \frac{\beta d^t}{n} 
\nonumber \\
& \le 
- (1+o(1)) \,
\frac{\beta d^{t^*}}{n} \log \frac{\beta d^{t^*}}{n} 
\nonumber \\
& = -(1+o(1)) \,
\frac{\beta \gamma}{d} \log
\frac{\beta \gamma}{d} \, ,
\label{eq:hw-tstar}
\end{align}
where we used the fact that, analogous to~\eqref{eq:geometric}, the sum over $t$ is dominated by its largest term, namely the contribution from $V_{t^*}$. This contribution is $o(1)$  since $\gamma/d=o(1)$. 

In Case 1, Eqs.~\eqref{eq:case1-tplus1}, \eqref{eq:case1-tplus2}, and
\eqref{eq:case1-rest} in Lemma~\ref{lem:last-few} imply that the contribution from the two large shells is
\begin{align*}
&- \frac{|V_{t^*+1}(w)|}{n} \log \frac{|V_{t^*+1}(w)|}{n} 
- \frac{|V_{t^*+2}(w)|}{n} \log \frac{|V_{t^*+2}(w)|}{n} \\
&\le H\!\left( \frac{|V_{t^*+1}|}{n} \right) 
= H(\e^{-x\gamma}) 
\quad \text{for some $x \in [\alpha,\beta]$.} 
\end{align*}
By Lemma~\ref{lem:diameter} we have at most one more shell $V_{t^*+3}$, and since its size is $o(n)$ it contributes $o(1)$ to $H_w$. Thus we have shown that 
\whp\ in Case 1, $H_w \le \Hmax$ for all $w$ where
\[
\Hmax = \max_{x \in [\alpha,\beta]} H(\e^{-x \gamma}) + o(1) \, .
\]
This proves~\eqref{eq:lower-case1-olog}, and recalling from Lemma~\ref{lem:expand} that $\alpha=\beta=1$ when $d=\omega(\log n)$ proves~\eqref{eq:lower-case1-omegalog}.

In Case 2, the contribution~\eqref{eq:hw-tstar} of $V_{t^*}$ to $H_w$ matters, as does the contribution from $V_{t^*+2}$. As in the proof of Lemma~\ref{lem:last-few}, either the expectation of $|V_{t^*+2}|/n$ is $\Omega(1/d)=\Omega(n/\log^5 n)$, in which case Chernoff bounds ensure that $|V_{t^*+2}|/n \le (1+o(1)) \,\e^{-\alpha \gamma}$ with probability $1-o(1/n)$, or $|V_{t^*+2}|/n = o(1/d)$ with the same probability. In either case, \whp\ for all $w$ the contribution of $V_{t^*}$ and $V_{t^*+2}$ to $H_w$ can be bounded as
\begin{equation}
-\frac{V_{t^*}}{n} \log \frac{V_{t^*}}{n}
-\frac{V_{t^*+2}}{n} \log \frac{V_{t^*+2}}{n}
\le 
(1+o(1)) \left( 
-\frac{\beta\gamma}{d} \log \frac{\beta\gamma}{d}
+ \alpha \gamma \e^{-\alpha \gamma} \right) \, ,
\label{eq:hw-case2}
\end{equation}
where we pessimistically maximized the two terms separately.

Finally, since $-(1-p) \log (1-p) \le p$, the contribution of $V_{t^*+1}$ to $H_w$ is at most $(|V_{\le t^*}|+|V_{t^*+2}|)/n$, which due to the logarithmic terms is small compared to~\eqref{eq:hw-case2}. This completes the proof of~\eqref{eq:lower-case2-olog}, and again recalling that $\alpha=\beta=1$ when $d=\omega(\log n)$ gives~\eqref{eq:lower-case2-omegalog}.
\end{proof}

\section{The upper bound}
\label{sec:upper}

Our upper bound on $\MD(G(n,p))$ is essentially identical to that in~\cite{BollMitPra}. The only difference is that fluctuations in vertex degrees have to be taken into account when $d=O(\log n)$.

We follow a standard strategy. We will show that, with high probability, all pairs of vertices $u,v$ have a large number of vertices $w$ that separate them. Therefore, if we construct $W$ by choosing enough landmarks $w$ uniformly at random, there is a good chance that $W$ separates every pair. The following lemma is standard, but we include a proof for the reader.

\begin{lemma}
\label{lem:random-sep}
Let $G=(V,E)$ be a graph with $|V|=n$. For each distinct pair of vertices $u, v \in V$, let 
\[
S(u,v) = \{ w \in V : d(u,w) \ne d(v,w) \}
\]
denote the set of vertices $w$ that separate $u,v$. Suppose that for all  $u,v$ we have $S(u,v) \ge \sigma n$ for some $\sigma=\sigma(n)$. 
Then
\[
\MD(G) \le \left\lceil \frac{2 \log n}
{-\log (1-\sigma)} \right\rceil \, .
\]
\end{lemma}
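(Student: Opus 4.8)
The plan is to use the probabilistic method, exactly as foreshadowed in the paragraph preceding the lemma: I would build the landmark set $W$ by choosing its vertices uniformly at random, and then use a union bound over all pairs to show that with positive probability the resulting $W$ separates every pair. Concretely, set $m = \lceil 2\log n / (-\log(1-\sigma)) \rceil$ and choose $m$ vertices $w_1,\dots,w_m$ independently and uniformly at random from $V$, letting $W=\{w_1,\dots,w_m\}$.

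First I would fix one distinct pair $u,v$ and bound the probability that $W$ fails to separate it. A single uniformly random landmark $w$ separates $u,v$ exactly when $w\in S(u,v)$, which occurs with probability $|S(u,v)|/n \ge \sigma$. Hence each $w_k$ fails to separate $u,v$ with probability at most $1-\sigma$, and by independence the probability that none of $w_1,\dots,w_m$ separates $u,v$ is at most $(1-\sigma)^m$.

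Next I would substitute the chosen value of $m$. Since $\sigma>0$ we have $-\log(1-\sigma)>0$, so the ceiling gives $m \ge 2\log n/(-\log(1-\sigma))$, equivalently $m\log(1-\sigma) \le -2\log n$, whence $(1-\sigma)^m = \e^{m\log(1-\sigma)} \le \e^{-2\log n} = n^{-2}$. Taking a union bound over the $\binom{n}{2} < n^2/2$ distinct pairs, the probability that $W$ fails to separate at least one pair is strictly less than $(n^2/2)\cdot n^{-2} = 1/2 < 1$. Consequently some realization of $W$ separates every pair, i.e.\ is a separator; since $|W| \le m$, this witnesses $\MD(G) \le m$, which is the claimed bound.

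There is essentially no hard step, so I expect only minor points to require care. One is that sampling with replacement may yield $|W| < m$, but this only helps, as any separator of size at most $m$ bounds $\MD(G)$ from above by $m$. A second is to track the direction of the inequality introduced by the ceiling together with $\log(1-\sigma)<0$, which is where a sign slip would most easily occur. Finally, one should note that the statement is only meaningful when $\sigma=\sigma(n)>0$, so that $m$ is finite; in our applications $\sigma$ will be bounded below, so this causes no difficulty.
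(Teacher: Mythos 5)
Your proposal is correct and follows essentially the same argument as the paper: sample $m=\lceil 2\log n/(-\log(1-\sigma))\rceil$ landmarks uniformly with replacement, bound the failure probability for a fixed pair by $(1-\sigma)^m \le n^{-2}$, and union-bound over the $\binom{n}{2}$ pairs to get a positive success probability. Your additional remarks about sampling with replacement and the sign of $\log(1-\sigma)$ are sound but not needed beyond what the paper already does.
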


\begin{proof}
Let $Z = \lceil (2 \log n)/(-\log (1-\sigma)) \rceil$ and let $W \subseteq V$ be a set of $Z$ of vertices chosen uniformly with replacement. For a given pair $u, v$, each $w \in W$ independently distinguishes them with probability at least $\sigma$. Therefore, the probability that $u$ and $v$ are not separated by any $w \in W$ is at most 
$(1-\sigma)^Z 
\le \e^{-2 \log n}
= 1/n^2$. 
Taking the union bound over the $\binom{n}{2}$ distinct pairs, $W$ is a separator with probability at least $1-\binom{n}{2}/n^2 \ge 1/2$. Thus some separator $W$ of size $Z$ exists, and $\MD(G) \le Z$.
\end{proof}

We will apply Lemma~\ref{lem:random-sep} as follows. For a given $t$ if we write
\[
\Delta_t(u,v) = V_t(u) \,\symdef\, V_t(v)
\]
where $\symdef$ denotes the symmetric difference, then
\begin{equation}
\label{eq:S_union}
S(u,v) = \bigcup_{t=0}^{\Diam} 
\Delta_t (u,v) \, . 
\end{equation}
To lower bound $|S(u,v)|$, we first show that after $t^*$ steps of expansion, the shells for any two distinct vertices are nearly disjoint, so that their symmetric difference is almost as large as their union: 

\begin{lemma}
\label{lem:from-a-pair}
Let $G = G(n,p)$ where $p=d/n$ and $c \log n \le d \le \log^5 n$ for some $c > 1$, and let $t^*$, be defined as in Definition~\ref{def:cases}. Then \whp\ the following holds for all distinct pairs of vertices $u, v \in V$:
\begin{align}
|\Delta_{t^*}(u,v)|
&= (1 \pm o(1)) \,d^{t^*-1} (\deg u + \deg v) 
\nonumber \\
&= (1 \pm o(1)) 
\,\frac{2 \alpha \gamma n}{d} \, . 
\label{eq:from-a-pair}
\end{align}
\end{lemma}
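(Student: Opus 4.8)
The plan is to estimate $|\Delta_{t^*}(u,v)| = |V_{t^*}(u) \symdef V_{t^*}(v)|$ by showing that the two shells $V_{t^*}(u)$ and $V_{t^*}(v)$ are nearly disjoint, so that their symmetric difference is $(1\pm o(1))$ times the sum of their individual sizes. Since Lemma~\ref{lem:expand} already gives $|V_{t^*}(w)| = (1\pm o(1))\,d^{t^*-1}\deg w$ for every $w$ \whp, and also $|V_{t^*}(w)| = (1\pm o(1))\,\alpha\gamma n/d$ via $d^{t^*} = \gamma n/d$, the second equality in~\eqref{eq:from-a-pair} will follow immediately from the first once I know the overlap $|V_{t^*}(u)\cap V_{t^*}(v)|$ is negligible. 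So the entire content of the lemma reduces to bounding this intersection.

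\medskip
\noindent\textbf{Bounding the overlap.} The key observation is that $d^{t^*} = o(n)$ by the definition of $t^*$, so each shell $V_{t^*}(w)$ occupies only $O(d^{t^*}) = o(n)$ vertices. First I would handle the degenerate possibility that $u$ and $v$ are close: if $d(u,v) \le 2t^*$, say, the balls around them could genuinely overlap. But \whp\ in $G(n,p)$ almost all pairs are far apart, and more to the point, for a \emph{fixed} pair $u,v$ I can reveal the two breadth-first-search trees out to radius $t^*$ and argue that the probability they collide is small. Concretely, I would expose $V_{\le t^*}(u)$ first; this is a set of size $O(d^{t^*}) = o(n)$. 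Then I grow the shells around $v$. At each step $t \le t^*$, the expected number of vertices in $V_t(v)$ that land inside the already-exposed set $V_{\le t^*}(u)$ is at most $|V_{t}(v)|\cdot |V_{\le t^*}(u)|/n = O(d^{t} \cdot d^{t^*}/n)$. Summing over $t \le t^*$, the total expected overlap is $O(d^{2t^*}/n) = O((d^{t^*}/n)\cdot d^{t^*}) = O(\gamma d^{t^*-1})$, which is $o(d^{t^*-1}) = o(|V_{t^*}(u)|/\deg u \cdot \deg u)$-small relative to the shell sizes precisely because $\gamma = o(d)$ forces $\gamma = o(1)$ in Case 1 — though in Case 2 one must be more careful, and I would instead directly compare the overlap $O(\gamma d^{t^*-1})$ against $|V_{t^*}| = \Theta(d^{t^*-1}\deg w) = \Theta(d^{t^*})$, giving a ratio of $O(\gamma/d) = o(1)$ in both cases. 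Thus the overlap is a vanishing fraction of each shell \whp\ for a fixed pair.

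\medskip
\noindent\textbf{The union bound and the main obstacle.} To upgrade from a fixed pair to \emph{all} pairs simultaneously, I need the overlap bound to hold with probability $1 - o(1/n^2)$, so that a union bound over the $\binom{n}{2}$ pairs survives. This is where I expect the real difficulty to lie: the expectation argument above controls the overlap only in expectation, and Markov's inequality gives probability $o(1)$, not $o(1/n^2)$. The fix is to establish concentration, either by the second-moment / martingale exposure of the BFS process or by a more careful first-moment bound showing that the probability of an overlap exceeding, say, $\gamma d^{t^*-1}\log n$ is polynomially small in $n$. Since $d > \log n$ and $d^{t^*} = o(n)$, there is substantial room: the probability that any single far vertex joins both balls is roughly $(d^{t^*}/n)^2$, and a tail bound on the total count of such collisions should be made to beat $n^{-2}$. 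The cleanest route is probably to note that conditioned on the exposed $V_{\le t^*}(u)$, the number of $v$-shell vertices hitting it is stochastically dominated by a binomial with mean $o(d^{t^*})$, so standard Chernoff bounds push the failure probability below any polynomial in $n$. I would lean on exactly the same Chernoff machinery already invoked throughout Lemma~\ref{lem:last-few}. Once concentration of the overlap is secured uniformly, the symmetric difference satisfies $|\Delta_{t^*}(u,v)| = |V_{t^*}(u)| + |V_{t^*}(v)| - 2|V_{t^*}(u)\cap V_{t^*}(v)| = (1\pm o(1))(|V_{t^*}(u)|+|V_{t^*}(v)|)$, and substituting the shell-size estimates from Lemma~\ref{lem:expand} yields both lines of~\eqref{eq:from-a-pair}.
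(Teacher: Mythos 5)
Your overall plan---reduce the lemma to showing $|V_{t^*}(u)\cap V_{t^*}(v)| = o\big(|V_{t^*}(u)|+|V_{t^*}(v)|\big)$ and then quote Lemma~\ref{lem:expand} for the individual shell sizes---is the right reduction, and your back-of-envelope value for the overlap, $O(d^{2t^*}/n)=O(\gamma d^{t^*-1})$, i.e.\ a fraction $O(\gamma/d)=o(1)$ of each shell, is the correct order of magnitude. But the argument you give for it has a genuine gap. The two-stage exposure (reveal the $u$-ball, then grow the $v$-ball) does not make the $v$-shells land ``uniformly at random'' in $V_{\le t^*}(u)$: revealing $V_{\le t^*}(u)$ exposes every edge incident to $V_{<t^*}(u)$, so as soon as the $v$-BFS touches the $u$-ball at some early level $t$ \emph{at the same depth in both trees}, that vertex's already-revealed progeny (about $d^{t^*-t}$ vertices) is deterministically dragged into both $t^*$-shells. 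A single common neighbor of $u$ and $v$ already forces $\Theta(d^{t^*-1})$ shared vertices at level $t^*$, which your per-level estimate $|V_t(v)|\cdot|V_{\le t^*}(u)|/n$ does not see; the conditional law of the collision count is not binomial, so the Chernoff step you lean on for the union bound is not available as stated. Relatedly, the remark that ``almost all pairs are far apart'' is false in this regime: by Lemma~\ref{lem:diameter} the diameter is $t^*+O(1)$, so \emph{every} pair satisfies $d(u,v)\le 2t^*$ and the two balls always interact; what saves the lemma is only that the interaction is a vanishing \emph{fraction} of the shells.

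The paper's proof sidesteps all of this with one device you are missing: instead of bounding the intersection directly, it computes the \emph{union}. It reruns the induction of Lemma~\ref{lem:expand} on the shells $V_t(u)\cup V_t(v)$ grown jointly from the pair $\{u,v\}$, with a new base case supplied by the fact that \whp\ no two vertices have three or more common neighbors (a first-moment count of copies of $K_{2,3}$, which have more edges than vertices), so that $|V_1(u)\cup V_1(v)| = (1-o(1))(\deg u+\deg v)$. The same machinery of Section~\ref{sec:proof-expand} then gives $|V_{t^*}(u)\cup V_{t^*}(v)| = (1\pm o(1))\,d^{t^*-1}(\deg u+\deg v)$ with failure probability $o(1/n^2)$, and the intersection falls out by inclusion--exclusion against the individual shell sizes from Lemma~\ref{lem:expand}---no analysis of the dependence between the two BFS trees is needed. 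To salvage your route you would have to track the propagation of early same-level collisions explicitly and prove concentration for that non-binomial count; the union-plus-inclusion--exclusion trick is the cleaner path.
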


\noindent We prove Lemma~\ref{lem:from-a-pair} in Section~\ref{sec:proof-expand}  by combining the proof of Lemma~\ref{lem:expand} with the fact that \whp\ no two vertices have more than two common neighbors. 

Lemma~\ref{lem:from-a-pair} establishes that $u$ and $v$ have a fairly large number of separators in their $t^*$th shells. They might have even more in their $(t^*+1)$st shell: in fact, in Case 1 we will show that  $|\Delta_{t^*+1}| = \Theta(n)$. Thus, analogous to Lemma~\ref{lem:last-few}, the next lemma lower bounds $|S(u,v)|$ either by including both these shells or just the $(t^*+1)$st, depending on the case. 

\begin{lemma}
\label{lem:separators}
    Let $G=G(n,p)$ where $p=d/n$, and let $d, c, t^*,\gamma,\alpha,\beta$ be as above, with $c \log n \le d \le \log^5 n$ for some $c > 1$. In Case 1 where $\gamma=\Theta(1)$, \whp\ for all distinct $u,v \in V$:
    \begin{align}
        \frac{|S(u,v)|}{n} 
        & \ge \frac{|\Delta_{t^*+1}(u,v)|}{n}
        \nonumber \\
        & \ge (2-o(1))
        \,(1-\e^{-\alpha\gamma}) 
        \,\e^{-\beta\gamma} 
        \, . 
        \label{eq:sym_dif_bound-case1}
    \end{align}
In Case 2 where $\gamma=\omega(1)$, \whp\ for all distinct $u,v \in V$:
\begin{align}
    \frac{|S(u,v)|}{n} 
        & \ge \frac{|\Delta_{t^*}(u,v)| + |\Delta_{t^*+1}(u,v)|}{n}
        \nonumber \\
        & \ge (2-o(1)) \left( 
        \frac{\alpha \gamma}{d} + 
        \e^{-\beta\gamma} 
        \right) 
        \, . 
        \label{eq:sym_dif_bound-case2}
    \end{align}
\end{lemma}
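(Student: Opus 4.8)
The plan is to realize $S(u,v)$ as a disjoint union of a few explicitly described sets of separators whose sizes I can control uniformly over all pairs. The basic fact is that $\Delta_t(u,v) = V_t(u)\symdef V_t(v)\subseteq S(u,v)$ for every $t$, since a vertex $w$ in the symmetric difference has $d(u,w)=t$ for exactly one of $u,v$ and hence $d(u,w)\ne d(v,w)$. Rather than add whole shells (which overlap: a vertex at distance $t^*$ from one endpoint and $t^*+1$ from the other lies in both $\Delta_{t^*}$ and $\Delta_{t^*+1}$), I will add subsets corresponding to distinct ordered distance pairs $(d(u,w),d(v,w))$, which are automatically disjoint. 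To estimate their sizes I expose the graph in two stages: first the breadth-first shells out to level $t^*$ from both $u$ and $v$, conditioning on the high-probability conclusions of Lemmas~\ref{lem:expand}, \ref{lem:last-few}, \ref{lem:diameter} and~\ref{lem:from-a-pair} (in particular $p|V_{t^*}(u)|,p|V_{t^*}(v)|\in[(1-o(1))\alpha\gamma,(1+o(1))\beta\gamma]$ and that $V_{t^*}(u)$ and $V_{t^*}(v)$ are disjoint up to a negligible set); then the edges joining the remaining $(1-o(1))n$ ``far'' vertices to these level-$t^*$ shells, which are still unexposed and hence fresh and independent across far vertices. For a far $w$, the event that $w$ is adjacent to $V_{t^*}(u)$ is exactly $\{d(u,w)=t^*+1\}$, with probability $1-\e^{-p|V_{t^*}(u)|}$.

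\textbf{Case 1.} Here I simply use $\Delta_{t^*+1}\subseteq S(u,v)$ and bound $|\Delta_{t^*+1}|$ from below. Writing $A=V_{t^*}(u)\setminus V_{t^*}(v)$, the event that a far $w$ is adjacent to $A$ but to no vertex of $V_{t^*}(v)$ forces $d(u,w)=t^*+1$ and $d(v,w)\ne t^*+1$, hence $w\in\Delta_{t^*+1}$; since $A$ and $V_{t^*}(v)$ are disjoint this probability factorizes as $(1-\e^{-p|A|})\,\e^{-p|V_{t^*}(v)|}\ge(1-o(1))(1-\e^{-\alpha\gamma})\e^{-\beta\gamma}$. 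The symmetric event (adjacent to $V_{t^*}(v)\setminus V_{t^*}(u)$, not to $V_{t^*}(u)$) is disjoint from it and contributes the same amount, so each far $w$ lies in $\Delta_{t^*+1}$ with probability at least $(2-o(1))(1-\e^{-\alpha\gamma})\e^{-\beta\gamma}$. Testing adjacency only against the disjoint parts keeps these indicators genuinely independent; their common success probability is $\Theta(1)$, so a Chernoff bound concentrates $|\Delta_{t^*+1}|$ around its mean with failure probability $\e^{-\Omega(n)}$, and a union bound over the $\binom n2$ pairs gives~\eqref{eq:sym_dif_bound-case1}.

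\textbf{Case 2.} Now $1-\e^{-\alpha\gamma}=1-o(1)$ and the outer shell is what carries the $\e^{-\beta\gamma}$ term. I take the disjoint union of three sets. The first is $\Delta_{t^*}(u,v)$, of size $(2-o(1))\,\alpha\gamma n/d$ by Lemma~\ref{lem:from-a-pair}, contributing the $\alpha\gamma/d$ term. The second and third are $A_u=V_{t^*+2}(u)\cap V_{t^*+1}(v)$ and $A_v=V_{t^*+2}(v)\cap V_{t^*+1}(u)$; since $\Diam\le t^*+2$ (Lemma~\ref{lem:diameter}), a far $w$ not adjacent to $V_{t^*}(u)$ has $d(u,w)=t^*+2$, so $A_u=\{\text{far }w: w\not\sim V_{t^*}(u),\ w\sim V_{t^*}(v)\}$, and bounding via the disjoint part $V_{t^*}(v)\setminus V_{t^*}(u)$ gives per-vertex probability at least $(1-o(1))\e^{-\beta\gamma}$. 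These three sets realize the distinct distance pairs ``some coordinate $=t^*$'', $(t^*+2,t^*+1)$ and $(t^*+1,t^*+2)$, so they are pairwise disjoint, and $A_u\cup A_v$ is essentially the part of $\Delta_{t^*+1}$ disjoint from $\Delta_{t^*}$. Hence $|S(u,v)|\ge|\Delta_{t^*}|+|A_u|+|A_v|\ge(2-o(1))(\alpha\gamma/d+\e^{-\beta\gamma})\,n$, which gives~\eqref{eq:sym_dif_bound-case2}.

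The step I expect to be the main obstacle is the uniform lower bound on $|A_u|,|A_v|$ in Case 2, because $\e^{-\beta\gamma}$ can be tiny and a union bound over $\binom n2$ pairs requires each deviation probability to beat $n^{-2}$. I would resolve this by the same dichotomy used in the proof of Lemma~\ref{lem:last-few}: if $\e^{-\beta\gamma}=\Omega(\gamma/d)$ then $\e^{-\beta\gamma}n=\omega(n/\log^5 n)=n^{1-o(1)}$, so the relevant binomials have mean $n^{1-o(1)}$ and a Chernoff lower tail gives failure probability $\e^{-\Omega(n^{1-o(1)})}\ll n^{-2}$; while if $\e^{-\beta\gamma}=o(\gamma/d)$ then $\alpha\gamma/d+\e^{-\beta\gamma}=(1+o(1))\alpha\gamma/d$ and the bound already follows from $|\Delta_{t^*}|$ alone, so the $\e^{-\beta\gamma}$ contribution may simply be dropped. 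The remaining care — that the small overlap $V_{t^*}(u)\cap V_{t^*}(v)$ does not spoil independence — is handled throughout by testing adjacency only against the disjoint parts of the two level-$t^*$ shells.
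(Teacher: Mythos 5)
Your proposal is correct and follows essentially the same route as the paper's proof: expose the BFS shells to level $t^*$, condition on Lemmas~\ref{lem:expand}, \ref{lem:diameter}, and~\ref{lem:from-a-pair}, treat the relevant counts as binomials over the $(1-o(1))n$ far vertices with success probability $(1-(1-p)^{|V_{t^*}(u)\setminus V_{t^*}(v)|})(1-p)^{|V_{t^*}(v)|}$ plus its mirror, and in Case~2 use the same dichotomy (either the $\e^{-\beta\gamma}$ term has mean large enough for a Chernoff bound beating $n^{-2}$, or it is absorbed into the $\Delta_{t^*}$ error term). Your explicit disjointification via the ordered distance pairs $(t^*+2,t^*+1)$ and $(t^*+1,t^*+2)$ is a slightly more careful rendering of the paper's implicit restriction of $\Delta_{t^*+1}$ to vertices outside $V_{\le t^*}(u)\cup V_{\le t^*}(v)$, but it is the same argument.
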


\begin{proof}
    A vertex is in $\Delta_{t^*+1}(u,v)$ 
    if it is outside $V_{\le t^*}(u) \cup V_{\le t^*}(v)$ and has an edge to either $V_{t^*+1}(u)$ or $V_{t^*+1}(v)$ but not both. Similar to~\eqref{eq:n-prime} in the proof of Lemma~\ref{lem:last-few}, the number of eligible vertices is
    \[
    n'=n-|V_{\le t^*}(u)\cup V_{\le t^*}(v)|
    = (1-o(1)) \,n \, .
    \]
    Then conditioned on $n'$, $|V_{t^*}(u)|$, $|V_{t^*}(v)|$, and $|V_{t^*}(u) \cap |V_{t^*}(v)|$, we have
\begin{equation}
     |\Delta_{t^*+1}(u,v)| \sim \Bin(n',p')
\end{equation}
     where
\begin{align*}
    p' = &
    \left(1-(1-p)^{|V_{t^*}(u) \setminus V_{t^*}(v)|} \right)
    (1-p)^{|V_{t^*}(v)|} \\
    + & 
    \left(1-(1-p)^{|V_{t^*}(v) \setminus V_{t^*}(u)|}\right)(
    1-p)^{|V_{t^*}(u)|} \, .
\end{align*}
Lemma~\ref{lem:from-a-pair} implies that 
\[
|V_{t^*}(u) \cap |V_{t^*}(v)| = o(|V^{t^*}(u)|) \, ,
\]
and so 
\[
(1-p)^{|V_{t^*}(u) \setminus V_{t^*}(v)|} 
= (1-p)^{|V_{t^*}(u)|} + o(1) \, . 
\]
Since by Lemma~\ref{lem:expand} we have
\[
(1-p)^{|V_{t^*}(u)|} 
\le \e^{-p |V_{t^*}(u)|} 
\le (1+o(1)) \,\e^{-\alpha \gamma} \, ,
\]
it follows that
\[
1-(1-p)^{|V_{t^*}(u) \setminus V_{t^*}(v)|}
= (1-o(1)) 
\left( 1-(1-p)^{|V_{t^*}(u)|} \right)
\, ,
\]
and similarly for the term with $u$ and $v$ swapped. Then using~\eqref{eq:rewrite} and the Taylor series as in the proof of Lemma~\ref{lem:last-few} gives
\begin{equation}
\begin{aligned}
p'
&= (1-o(1)) \left[ \left(1-(1-p)^{|V_{t^*}(u)|}\right)(1-p)^{|V_{t^*}(v)|} \right. \\
&\qquad + \left. \left(1-(1-p)^{|V_{t^*}(v)|}\right)(1-p)^{|V_{t^*}(u)|} \right] \\
    &= (1-o(1)) \left[ \left(1-\e^{-p|V_{t^*}(u)|}\right) \e^{-p|V_{t^*}(v)|}
    + \left(1-\e^{-p|V_{t^*}(v)|} \right) \e^{-p|V_{t^*}(u)|} \right] \\
    &\ge (2-o(1))
    \,(1-\e^{-\alpha\gamma}) \,\e^{-\beta\gamma} 
    \, ,
    \end{aligned}
    \end{equation}
where we pessimistically minimized the two terms in the penultimate line separately. 
Thus the conditional expectation of $|\Delta_{t^*+1}|$ is bounded below by
    \[
        \nu 
        := n' p'
        \ge (2-o(1)) 
        \,(1-\e^{-\alpha\gamma}) \,\e^{-\beta\gamma} n 
        \, .
    \]
    
\textbf{Case 1.} If $\gamma=\Theta(1)$, then  $\nu=\Theta(n)$ and Chernoff bounds imply~\eqref{eq:sym_dif_bound-case1}.

\textbf{Case 2.} Since $\gamma=\omega(1)$ we have $\e^{-\alpha\gamma}=o(1)$, so we simply write $\nu = (2-o(1)) \,\e^{-\beta\gamma} n$. 

Now the first term of~\eqref{eq:sym_dif_bound-case2} corresponds to $\Delta_{t^*}$ and comes from Lemma~\ref{lem:from-a-pair}. For the second term involving $\Delta_{t^*+1}$, we have a situation similar to Case 2 of the lower bound. If $\nu \ge n/\log^{10} n$, then Chernoff bounds imply that $|\Delta_{t^*+1}| \ge (1-o(1)) \,\nu$ with probability $1-o(1/n^2)$. Alternatively, if  $\nu < n/\log^{10} n$, Chernoff bounds give $|\Delta_{t^*+1}| \le \nu \log^5 n = o(\gamma/d)$ with probability $1-o(1/n^2)$. In that case, we absorb $|\Delta_{t^*+1}|/n$ into the error term of $|\Delta_{t^*}|/n = (2-o(1)) \,\alpha \gamma / d$. 

In either situation we can take the union bound over all $u,v \in V$, completing the proof of~\eqref{eq:sym_dif_bound-case2}. 
\end{proof}

Finally, combining Lemmas~\ref{lem:separators} and~\ref{lem:random-sep} gives the following upper bounds on $\MD(G)$. In the following the function $q$ mirrors that in Theorem~\ref{thm:bollmitpra} from~\cite{BollMitPra}.

\begin{theorem}
\label{thm:main-upper}
Let $G=G(n,p)$ where $p=d/n$ and $c\log n \le d \le \log^5 n$ for some $c>1$, and let $\alpha,\beta$, and $\gamma$ be as above.

In Case 1 where $\gamma=\Theta(1)$, \whp
\begin{align}
\text{if $d=O(\log n)$,}\quad \MD(G) &\le 
(1+o(1))\,\frac{2 \log n}{-\log q(\alpha,\beta)} \, ,
\label{eq:upper-case1-olog} \\
\text{if $d=\omega(\log n)$,}\quad \MD(G) &\le (1+o(1)) 
\,\frac{2 \log n}{-\log q}
\, . 
\label{eq:upper-case1-omegalog}
\end{align}
where 
\begin{align*}
q(\alpha,\beta) 
&= 
1 - 2
\,(1-\e^{-\alpha\gamma}) 
\,\e^{-\beta\gamma} 
\\
q = q(1,1) &= 1-2(1-\e^{-\gamma}) \,\e^{-\gamma} 
= (\e^{-\gamma})^2 + (1-\e^{-\gamma})^2 \, .
\end{align*} 
In Case 2, \whp
\begin{align}
\text{if $d=O(\log n)$,}\quad \MD(G) &\le (1+o(1)) 
\,\frac{\log n}{\alpha\gamma/d + \e^{-\beta\gamma}} \, ,
\label{eq:upper-case2-olog} \\
\text{if $d=\omega(\log n)$,}\quad \MD(G) &\le  (1+o(1)) 
\,\frac{\log n}{\gamma/d + \e^{-\gamma}} \, .
\label{eq:upper-case2-omegalog}
\end{align}

\end{theorem}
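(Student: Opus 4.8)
The plan is to derive all four bounds directly by feeding the separator counts from Lemma~\ref{lem:separators} into the random-landmark bound of Lemma~\ref{lem:random-sep}. Lemma~\ref{lem:separators} guarantees that \whp\ every pair $u,v$ satisfies $|S(u,v)| \ge \sigma n$, so I would take $\sigma$ equal to the right-hand side of~\eqref{eq:sym_dif_bound-case1} in Case 1 and of~\eqref{eq:sym_dif_bound-case2} in Case 2; Lemma~\ref{lem:random-sep} then yields $\MD(G) \le \lceil 2\log n / (-\log(1-\sigma))\rceil$, and the whole task reduces to simplifying $-\log(1-\sigma)$ in each regime. The ceiling contributes at most an additive $1$, which is negligible against $\log n$ and is absorbed into the $(1+o(1))$ prefactor throughout.

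For Case 1, I would set $\sigma = (2-o(1))(1-\e^{-\alpha\gamma})\e^{-\beta\gamma}$ and observe that, since $q(\alpha,\beta)=1-2(1-\e^{-\alpha\gamma})\e^{-\beta\gamma}$, we have $1-\sigma = q(\alpha,\beta) + o(1)$, the error being an $o(1)$ multiple of the $\Theta(1)$ quantity $(1-\e^{-\alpha\gamma})\e^{-\beta\gamma}$. The key arithmetic fact I would verify is that $q(\alpha,\beta)$ is bounded strictly inside $(0,1)$ when $\gamma=\Theta(1)$: writing $x=\e^{-\alpha\gamma}$ and using $\e^{-\beta\gamma}\le \e^{-\alpha\gamma}=x$ (since $\beta>\alpha$) gives $2(1-\e^{-\alpha\gamma})\e^{-\beta\gamma} \le 2x(1-x)\le 1/2$, so $q(\alpha,\beta)\in[1/2,1)$ and $-\log q(\alpha,\beta)=\Theta(1)$. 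Consequently $-\log(1-\sigma) = -\log q(\alpha,\beta) - o(1) = (1-o(1))(-\log q(\alpha,\beta))$, and dividing $2\log n$ by this gives~\eqref{eq:upper-case1-olog}. Specializing to $\alpha=\beta=1$, which holds when $d=\omega(\log n)$ by Lemma~\ref{lem:expand}, collapses $q(\alpha,\beta)$ to $q=q(1,1)$ and yields~\eqref{eq:upper-case1-omegalog}.

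For Case 2, the decisive structural feature is that $\sigma=o(1)$: since $\gamma=\omega(1)$ and $\gamma=o(d)$, both $\alpha\gamma/d$ and $\e^{-\beta\gamma}$ tend to $0$, so $\sigma=(2-o(1))(\alpha\gamma/d+\e^{-\beta\gamma})\to 0$. Here I would use the Taylor expansion $-\log(1-\sigma)=\sigma+O(\sigma^2)=(1+o(1))\sigma$, valid precisely because $\sigma=o(1)$, in place of the direct tracking used in Case 1. Then $2\log n / (-\log(1-\sigma)) = (1+o(1))(2\log n)/\sigma$, and the factor $2$ in the numerator cancels against the factor $2-o(1)$ inside $\sigma$, leaving $(1+o(1))(\log n)/(\alpha\gamma/d+\e^{-\beta\gamma})$, which is~\eqref{eq:upper-case2-olog}; this cancellation is exactly why Case 2 carries no factor of $2$ while Case 1 does. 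Setting $\alpha=\beta=1$ for $d=\omega(\log n)$ gives~\eqref{eq:upper-case2-omegalog}.

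The proof is essentially bookkeeping once Lemmas~\ref{lem:separators} and~\ref{lem:random-sep} are in hand, so the substantive work lies upstream in those lemmas rather than here. The only genuine care needed in this final step is to keep the two regimes apart: in Case 1 one must \emph{not} Taylor-expand (since $\sigma=\Theta(1)$) and must instead confirm that $q(\alpha,\beta)$ stays away from $0$ and $1$, whereas in Case 2 one relies on $\sigma=o(1)$ and the factor-of-two cancellation. I expect no real obstacle beyond ensuring these $o(1)$ manipulations hold uniformly over all pairs $u,v$, which is already guaranteed because Lemma~\ref{lem:separators} holds \whp\ simultaneously for every pair.
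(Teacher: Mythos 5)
Your proposal is correct and follows essentially the same route as the paper: plug the $\sigma$ from Lemma~\ref{lem:separators} into Lemma~\ref{lem:random-sep}, keep $-\log(1-\sigma)$ exact in Case 1 and Taylor-expand it in Case 2 (where the factor of $2$ cancels), and set $\alpha=\beta=1$ when $d=\omega(\log n)$. Your explicit check that $q(\alpha,\beta)$ is bounded strictly inside $(0,1)$ is a detail the paper only implicitly absorbs, but it is a correct and welcome addition rather than a deviation.
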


\begin{proof}
    In Case 1, we apply Lemma~\ref{lem:random-sep} with the lower bound given by Lemma~\ref{lem:separators}, 
    \[
    \sigma = (2-o(1)) 
    \,(1-\e^{-\alpha\gamma})
    \,\e^{-\beta\gamma} 
    \]
    and write $q(\alpha,\beta)=1-\sigma$ (absorbing the error term).

    In Case 2, we likewise apply Lemma~\ref{lem:random-sep} but with the lower bound
    \[
    \sigma 
    = (2-o(1)) \left(\frac{\alpha\gamma}{d}+(1-\e^{-\gamma\alpha}) \,\e^{-\gamma\beta}\right)
    \, . 
    \]
    Moreover, in this case we have $\sigma=o(1)$ since $\gamma/d = o(1)$ and $\gamma = \omega(1)$. Then we use $-\log(1-\sigma) = (1+o(1)) \,\sigma$ for the denominator in Lemma~\ref{lem:random-sep}.
    
    In both cases, as per Corollary~\ref{cor:err_prob_deg}, when $\gamma=\omega(1)$ and $d=\omega(\log n)$ we can take $\alpha=\beta=1$, yielding~\eqref{eq:upper-case1-omegalog} and~\eqref{eq:upper-case2-omegalog}. In particular, $q=q(1,1)$ coincides with the quantity $q$ used in Theorem~\ref{thm:bollmitpra} from~\cite{BollMitPra}.
\end{proof}

Along with Theorem~\ref{thm:main-lower}, this completes the proof of our main Theorem~\ref{thm:main-combined}.

\section{Expansion: Proof of Lemmas~\ref{lem:expand} and~\ref{lem:from-a-pair}} 
\label{sec:proof-expand}

First we state two versions of the Chernoff bound. The first one is slightly non-standard, but it follows from the usual argument involving the moment generating function. 

\begin{lemma}
\label{lem:chernoff-1}
Let $X$ be a binomial random variable with mean $\mu$. Let $\delta, \kappa > 0$ be constants with $\delta < 1$.  Then there exist constants $0 < \alpha < 1 < \beta$ such that 
\begin{align}
\Pr[X < \alpha \mu] 
&\le \e^{-\delta \mu} \label{eq:chernoff-1-lower} \\
\Pr[X > \beta \mu] 
&\le \e^{-\kappa \mu} \, . 
\label{eq:chernoff-1-upper}
\end{align}
\end{lemma}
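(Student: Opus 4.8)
The plan is to run the standard moment-generating-function (Chernoff) argument, but to read it \emph{in reverse}: rather than fixing a multiplicative deviation and reading off the resulting exponential rate, I fix the desired rate ($\delta$ for the lower tail, $\kappa$ for the upper tail) and solve for the deviation constant ($\alpha$, respectively $\beta$). Writing $X \sim \Bin(N,q)$ with $\mu = Nq$, I would begin from the exact generating function $\Exp[e^{sX}] = (1-q+q\e^s)^N$ and apply $1+x \le \e^x$ to get the parameter-free bound $\Exp[e^{sX}] \le \exp(\mu(\e^s-1))$, which holds for \emph{every} binomial irrespective of $N$ and $q$. Markov's inequality applied to $\e^{sX}$ (with $s>0$ for the upper tail and $s<0$ for the lower tail), followed by optimizing over $s$, then produces both tail bounds governed by a single clean rate function.

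Carrying out the optimization gives $\Pr[X \ge \beta\mu] \le \e^{-\mu g(\beta)}$ and $\Pr[X \le \alpha\mu] \le \e^{-\mu g(\alpha)}$, where in both cases $g(x) = x\log x - x + 1$ with the convention $0\log 0 = 0$ (the optimal tilt is $s=\log\beta$ and $s=\log\alpha$ respectively). Thus the entire lemma reduces to understanding this one function. I would record the elementary facts $g(1)=0$ and $g'(x)=\log x$, from which $g$ is strictly decreasing on $(0,1)$ and strictly increasing on $(1,\infty)$, with $g(0^+)=1$ and $g(x)\to\infty$ as $x\to\infty$.

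To finish, I invert $g$ on each side of $1$. On $(0,1)$ it is a continuous strictly decreasing bijection onto $(0,1)$, so for the prescribed $\delta\in(0,1)$ there is a unique $\alpha\in(0,1)$ with $g(\alpha)=\delta$, yielding $\Pr[X<\alpha\mu]\le \Pr[X\le\alpha\mu]\le \e^{-\delta\mu}$. On $(1,\infty)$ it is a continuous strictly increasing bijection onto $(0,\infty)$, so for any $\kappa>0$ there is a unique $\beta>1$ with $g(\beta)=\kappa$, yielding $\Pr[X>\beta\mu]\le \e^{-\kappa\mu}$. The essential point is that $\alpha$ and $\beta$ depend only on $\delta$ and $\kappa$, and not on $N$, $q$, or $\mu$, which is exactly the uniformity the lemma asserts.

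The argument is essentially routine, so I do not expect a deep obstacle; the one point genuinely requiring care is the hypothesis $\delta<1$. It is forced by the fact that the lower-tail rate $g$ is bounded above, with $\sup_{(0,1)} g = g(0^+) = 1$, reflecting the hard floor $\Pr[X\le\alpha\mu]\ge \Pr[X=0]=(1-q)^N \approx \e^{-\mu}$: no multiplicative undershoot can be excluded faster than rate $\e^{-\mu}$, so a target rate $\delta\ge 1$ is simply unattainable. By contrast the upper-tail rate is unbounded, which is why every $\kappa>0$ is achievable and no restriction on $\kappa$ appears. I would also verify that the $1+x\le\e^x$ relaxation introduces nothing that depends on the binomial's parameters, so that the same $\alpha,\beta$ serve uniformly across all binomials with the given mean.
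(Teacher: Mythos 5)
Your proposal is correct and follows essentially the same route as the paper: bound the binomial moment generating function by the Poisson one via $1+x\le \e^x$, apply Markov's inequality with an optimal tilt $s=\log\alpha$ (resp.\ $\log\beta$), and invert the resulting rate function $1-x+x\log x$ on each side of $1$, using $\delta<1$ because the lower-tail rate is capped at $1$. Your added remarks on monotonicity of the rate function and on why $\delta<1$ is forced are correct refinements of the same argument.
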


\begin{proof}
The moment generating function of $X \sim \Bin(n,p=\mu/n)$ is upper bounded by that of a Poisson variable with mean $\mu$, 
\[
\Exp[\e^{\lambda X}] = \left( 1 + p (\e^\lambda-1) \right)^n \le \e^{\mu (\e^\lambda-1) } \, . 
\]
Let $0 < \alpha < 1$. For any $\lambda < 0$, Markov's inequality gives
\[
\Pr[X < \alpha \mu] 
= \Pr[\e^{\lambda X} > \e^{\lambda \alpha \mu}] 
\le \Exp[\e^{\lambda X}] / \e^{\lambda \alpha \mu} 
= \e^{-\mu (1 - \e^\lambda + \lambda \alpha)} \, , 
\]
and respectively for $\Pr[X > \beta \mu]$ with $\lambda$ replaced by $-\lambda$. The right-hand side is minimized when $\lambda = \log \alpha$ (resp.\ $-\log \alpha$), giving
\begin{equation}
\label{eq:lower-tail}
\Pr[X < \alpha \mu] \le \e^{-\mu f(\alpha)} 
\quad \text{and} \quad
\Pr[X > \beta \mu] \le \e^{-\mu f(\beta)} 
\end{equation}
where
\[
f(\alpha) = 1 - \alpha + \alpha \log \alpha \, .
\]

The function $f$ is continuous with $f(0)=1$ and $f(1)=0$, so for any $0 < \delta < 1$ there is some $0 < \alpha < 1$ such that $f(\alpha) = \delta$. Similarly, $f(\beta)$ grows without bound for $\beta > 1$, so for any $\kappa > 0$ there is a $\beta$ such that $f(\beta) = \kappa$. 
\end{proof}

Applying this to the degree of a vertex in $G(n,p)$ yields the following, which proves~\eqref{eq:degree-bound} in Lemma~\ref{lem:expand}.
\begin{corollary}
\label{cor:err_prob_deg}
Let $G=(n,p)$ where $p=d/n$ and $d \ge c \log n$ for some $c = c(n) > 1$. Then there are constants $0 < \alpha < 1 < \beta$ such that \whp\ for all vertices $w$,
\begin{equation}
\label{eq:all-deg}
(1-o(1)) \,\alpha d 
\le \deg w 
\le (1+o(1)) \,\beta d \, .
\end{equation}
Furthermore, if $c=\omega(1)$ so that $d=\omega(\log n)$, \eqref{eq:all-deg} holds with $\alpha=\beta=1$.
\end{corollary}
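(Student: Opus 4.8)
The plan is to treat each vertex degree as a binomial variable and apply the sharpened Chernoff bound of Lemma~\ref{lem:chernoff-1}, then take a union bound over all $n$ vertices. For a fixed $w$ the degree $\deg w \sim \Bin(n-1,p)$ has mean $\mu = (n-1)p = (1-1/n)\,d = (1-o(1))\,d$, so $\mu \ge (1-o(1))\,c\log n$. Everything reduces to choosing the tail rates $\delta$ and $\kappa$ in Lemma~\ref{lem:chernoff-1} so that each tail probability is $o(1/n)$, which is what the union bound requires.

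For the lower tail, Lemma~\ref{lem:chernoff-1} gives $\Pr[\deg w < \alpha\mu] \le \e^{-\delta\mu} \le n^{-\delta c(1-o(1))}$, which is $o(1/n)$ precisely when $\delta c > 1$. The crucial observation is that Lemma~\ref{lem:chernoff-1} forces $\delta < 1$ (this is exactly the constraint that yields a contraction factor $\alpha < 1$), so $\delta c > 1$ is achievable with $\delta < 1$ if and only if $c > 1$; this is where the connectivity-threshold hypothesis enters. I would therefore fix any $\delta$ with $1/c < \delta < 1$ and let $\alpha = \alpha(\delta) < 1$ be the constant Lemma~\ref{lem:chernoff-1} produces. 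For the upper tail there is no such constraint, since $\kappa$ may be any positive constant: I pick any $\kappa > 1/c$ (e.g.\ $\kappa=1$) and obtain $\beta = \beta(\kappa) > 1$ with $\Pr[\deg w > \beta\mu] \le \e^{-\kappa\mu} = o(1/n)$. A union bound over the $n$ vertices then shows that \whp\ every $w$ satisfies $\alpha\mu \le \deg w \le \beta\mu$, and since $\mu = (1-o(1))d$ this is exactly~\eqref{eq:all-deg}.

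For the second claim, when $c = \omega(1)$ we have $\mu = (1-o(1))d = \omega(\log n)$, and I would instead invoke the standard multiplicative Chernoff bound with a vanishing tolerance. Choosing $\eps = \eps(n) \to 0$ slowly enough that $\mu\,\eps^2 = \omega(\log n)$ — possible since $\mu/\log n \to \infty$ — makes $\Pr[\,|\deg w - \mu| > \eps\mu\,] = o(1/n)$, so after a union bound \whp\ every degree lies in $(1\pm\eps)\mu = (1\pm o(1))d$. Thus $\alpha = \beta = 1$ suffices in this regime.

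I expect the only genuinely delicate point to be the interplay described above between the forced range $\delta < 1$ and the union-bound requirement $\delta c > 1$: this is the precise reason the argument works down to $d = c\log n$ for any $c>1$ but no further. Everything else is routine substitution of $\mu = (1-o(1))d$ and bookkeeping of the $o(1)$ error terms.
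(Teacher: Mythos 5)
Your proposal is correct and follows essentially the same route as the paper: represent $\deg w$ as $\Bin(n-1,p)$ with mean $\mu=(1-o(1))d$, apply the sharpened Chernoff bound of Lemma~\ref{lem:chernoff-1} with tail rates chosen so that $\delta c>1$ while $\delta<1$ (the paper takes $\delta=\kappa=1/\sqrt{c}$, which lies in your interval $(1/c,1)$), and union-bound over the $n$ vertices. Your treatment of the $c=\omega(1)$ case via a vanishing tolerance $\eps$ with $\eps^2\mu=\omega(\log n)$ is equivalent to the paper's Taylor expansion $f(1+\eps)=\eps^2/2-O(\eps^3)$ of the rate function.
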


\begin{proof}
If $X = \deg w$ for a given $w$, then $X \sim \Bin(n-1,p)$ and has mean $\mu = (1-1/n) d = (c-o(1)) \log n$. In Lemma~\ref{lem:chernoff-1} we set $\delta = \kappa = 1/\sqrt{c} < 1$, say. Then since $c >1 $, the error probabilities in~\eqref{eq:chernoff-1-lower} and~\eqref{eq:chernoff-1-upper} are 
\[
\e^{-\sqrt{c} \mu} 
= n^{-(1-o(1)) \sqrt{c}} 
= o(1/n) \, ,
\]
letting us take the union bound over all $w$. 

If $c=\omega(1)$, the Taylor series 
\[
f(1 + \eps) = \eps^2/2 - O(\eps^3) \, . 
\]
implies that we can set $\alpha=\beta=1$ and absorb $\eps$ in the $o(1)$ terms in~\eqref{eq:all-deg}. 
\end{proof}

\begin{remark}
In the regime $d = c \log n$ for constant $c$, the standard Chernoff bound does not imply that the minimum degree is $\Omega(\log n)$ until $c$ is some constant strictly larger than $1$, i.e., well above the connectivity threshold. Hence our use of the more precise Lemma~\ref{lem:chernoff-1}.    
\end{remark}

Since $|V_1(w)| = \deg w$, Corollary~\ref{cor:err_prob_deg} forms the base case for our inductive proof of Lemma~\ref{lem:expand}. For the inductive step we will use the usual Chernoff bound in the following form:

\begin{lemma}
\label{lem:chernoff-2}
Let $X$ be a binomial random variable with mean $\mu$. Then for any $0 < \eps < 1$, 
\begin{equation}
\label{eq:chernoff-standard}
\Pr\!\left[ 
\left| \frac{X}{\mu} - 1 \right| > \eps \right] 
\le 2 \e^{-\eps^2 \mu / 3} \, .
\end{equation}
In particular, if $\mu = \Omega(\log^t n)$ for $t \ge 2$, then there is a constant $A$ such that
\begin{equation}
\label{eq:chernoff-2}
(1-\eps) \mu \le X \le (1+\eps) \mu
\quad \text{where} \quad
\eps = \frac{A}{\log^{(t-1)/2} n}
\end{equation}
with probability $1-o(1/n^3)$.
\end{lemma}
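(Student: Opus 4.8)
The plan is to prove this in two stages: first the general two-sided bound~\eqref{eq:chernoff-standard}, which is the textbook multiplicative Chernoff bound, and then the corollary~\eqref{eq:chernoff-2}, which follows by a direct substitution.

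For~\eqref{eq:chernoff-standard} I would reuse the Poisson domination of the moment generating function already recorded in the proof of Lemma~\ref{lem:chernoff-1}, namely $\Exp[\e^{\lambda X}] \le \e^{\mu(\e^\lambda - 1)}$ for $X \sim \Bin(n, \mu/n)$. For the upper tail, Markov's inequality applied to $\e^{\lambda X}$ with $\lambda > 0$ gives $\Pr[X > (1+\eps)\mu] \le \e^{-\lambda(1+\eps)\mu + \mu(\e^\lambda-1)}$, and minimizing over $\lambda$, at $\lambda = \log(1+\eps)$, yields the classical form $\Pr[X > (1+\eps)\mu] \le (\e^\eps/(1+\eps)^{1+\eps})^\mu$. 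The lower tail is symmetric: taking $\lambda < 0$ and optimizing at $\lambda = \log(1-\eps)$ gives $\Pr[X < (1-\eps)\mu] \le (\e^{-\eps}/(1-\eps)^{1-\eps})^\mu$.

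The one genuinely analytic step is converting these expressions into clean Gaussian-type exponents. I would establish the elementary scalar inequalities $\e^\eps/(1+\eps)^{1+\eps} \le \e^{-\eps^2/3}$ and $\e^{-\eps}/(1-\eps)^{1-\eps} \le \e^{-\eps^2/2}$, valid for $0 < \eps < 1$, by expanding $(1+\eps)\log(1+\eps) - \eps$ and $-(1-\eps)\log(1-\eps) - \eps$ in their Taylor series; this is where all the work sits, though it is routine. Since $\eps^2\mu/2 \ge \eps^2\mu/3$, both tails are then bounded by $\e^{-\eps^2\mu/3}$, and a union bound over the two tails produces the factor of $2$ in~\eqref{eq:chernoff-standard}.

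Finally, for~\eqref{eq:chernoff-2} I would simply substitute. Writing $\mu \ge B\log^t n$ for a constant $B > 0$ and setting $\eps = A/\log^{(t-1)/2} n$, the exponent becomes $\eps^2\mu/3 \ge (A^2 B/3)\log n$, so~\eqref{eq:chernoff-standard} gives a failure probability at most $2n^{-A^2 B/3}$. Choosing $A$ large enough that $A^2 B/3 > 3$ makes this $o(1/n^3)$, as required. The hypothesis $t \ge 2$ guarantees $\log^{(t-1)/2} n \to \infty$, so $\eps \to 0$ and in particular $\eps < 1$ for large $n$, which is exactly the range where~\eqref{eq:chernoff-standard} applies. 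The only real obstacle anywhere is the pair of scalar inequalities in the third paragraph; everything else is mechanical.
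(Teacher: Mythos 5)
Your proposal is correct and, for the part the paper actually proves, identical in approach: the paper treats the two-sided bound \eqref{eq:chernoff-standard} as the standard Chernoff inequality and only carries out the substitution $\eps = A/\log^{(t-1)/2} n$, $\mu = \Omega(\log^t n)$ to get exponent $\Omega(\log n)$ with a suitable constant, exactly as you do in your final paragraph. Your additional derivation of \eqref{eq:chernoff-standard} from the moment generating function and the scalar inequalities $(1+\eps)\log(1+\eps)-\eps \ge \eps^2/3$ and $-(1-\eps)\log(1-\eps)-\eps \le -\eps^2/2$ is standard and correct, just more than the paper bothers to write down.
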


\begin{proof}
We define $\eps$ in~\eqref{eq:chernoff-2} so that $\eps^2 \mu = \Omega(\log n)$. If $\mu \ge C \log^t n$, then setting $A=10/C$ suffices to make the error probability in~\eqref{eq:chernoff-standard} $o(1/n^3)$.
\end{proof}

We now prove Lemma~\ref{lem:expand} by recursively applying Lemma~\ref{lem:chernoff-2}, using Corollary~\ref{cor:err_prob_deg} as the base case.

\begin{proof}[Proof of Lemma~\ref{lem:expand}.] 
Since Corollary~\ref{cor:err_prob_deg} established that~\eqref{eq:degree-bound} holds with high probability, we condition on this event. In  particular we assume that $\deg w = \Omega(\log d)$ for all $w$. Our goal is then to prove that \eqref{eq:Nupper-lower-expand0} holds for all $1 \le t \le t^*$ with probability $1-o(1/n)$ for any fixed $w$, and therefore holds \whp\ for all vertices $w$. 

Recall that, at each step of expansion, if we condition on $|V_{t'}|$ for $t' < t$ then $|V_t|$ is a binomial random variable. We denote its expection $\mu_t$. There are two sources of multiplicative error that we need to track over the $t^*$ steps of expansion. First, $\mu_t$ differs slightly from $d |V_{t-1}|$, and second, $|V_t|$ differs slightly from $\mu_t$. The first source of error increases as $t$ approaches $t^*$, and the second decreases as $t$ and $\mu_t$ increase and the Chernoff bound becomes tighter. Happily, the product of all these error terms is $1 \pm o(1)$.

Very similar reasoning appears in Lemma 2.1 of~\cite{BollMitPra}, which they prove for $d=\omega(\log n)$. Since we include the case $d=O(\log n)$, the vertex degrees, and hence the sizes of $V_t$, have significant fluctuations as in Corollary~\ref{cor:err_prob_deg}, but this only affects the base case of the induction at $t=1$. 
(This base case will also change in Lemma~\ref{lem:from-a-pair}, where we consider $V_t(u) \cup V_t(v)$ for a pair of vertices $u,v$.)

For each $1 \le t \le t^*$, let $E_t$ denote the event
\[
\label{eq:Et_statement}
\Pi^-_t d^{t-1} \deg w 
\le |V_t(w)|
\le \Pi^+_t d^{t-1} \deg w
\]
where, for some $\{\delta_t\}$ and $\{\eps_t\}$ to be defined below,
\[
\Pi^{-}_t = \prod_{t'=2}^{t}
(1 - \delta_{t'})(1 - \eps_{t'}) 
\quad \text{and} \quad
\Pi^{+}_t = \prod_{t'=2}^{t}
(1 + \eps_{t'}) \, . 
\]
(Of course, $\Pi^-_1=\Pi^+_1=1$.) 
We also write $E_{\le t} = \bigcap_{t'=1}^t E_{t'}$ and $E_{<t}$ similarly. 

Since $|V_1(w)| = \deg w$, the base case $E_1$ holds trivially. Thus
\begin{equation}
\label{eq:prod_form_Eu}
\Pr[E_{\le t^*}] 
= \Pr\!\left[ \,\bigcap_{t=2}^{t^*} E_t \right]
= \prod_{t=2}^{t^*} \Pr[E_t \mid E_{<t}] 
\, .
\end{equation}
We will show that 
\[ 
\Pr[E_t \mid E_{<t}] = 1-o(1/n^3) \, ,
\]
so we can take the union bound over all $t^* < \log n$ steps of the induction, as well as over all $n$ vertices $w$ (and, in Lemma~\ref{lem:from-a-pair}, over all $n^2$ pairs of vertices $u,v$).  Specifically, for each $2 \le t \le t^*$ we will show that, conditioned on $E_{<t}$, the expectation $\mu_t$ of $|V_t|$ is deterministically almost $d |V_{t-1}|$.
\begin{equation}
\label{eq:mult1}
(1-\delta_t) \,d |V_{t-1}| 
\;\le\; 
\mu_t 
\;\le\;
d |V_{t-1}|
\end{equation}
We will also that that, with probability $1-o(1/n^3)$,
\begin{equation}
\label{eq:mult2}
(1-\eps_t) \,\mu_t 
\;\le\; |V_t| 
\;\le\; (1+\eps_t) \,\mu_t \, . 
\end{equation}
Together these imply the induction step
\begin{equation}
\label{eq:induction}
(1-\delta_t)(1-\eps_t) \,d |V_{t-1}|
\le |V_t| 
\le (1+\eps_t) \,d |V_{t-1}| \, ,
\end{equation}
and therefore establish $E_t$. Finally, the values of $\delta_t$ and $\eps_t$ that we will give below in Eqs.~\eqref{eq:delta} and \eqref{eq:eps} decrease geometrically as we count backwards from $t^*$ or forwards from $t=2$ respectively, so that the products $\Pi^\pm_t$ are $1 \pm o(1)$. 

To prove~\eqref{eq:mult1} and~\eqref{eq:mult2}, let $t \le t^*$ and suppose inductively that $E_{<t}$ holds. As in the proof of Lemma~\ref{lem:last-few}, 
\[
|V_t| \sim \Bin(n',p')
\quad \text{with mean} \quad
\mu_t=n'p' \, ,
\]
where 
\[
n'=n-|V_{<t}|
\quad \text{and} \quad
p'=1-(1-p)^{|V_{t-1}|} \, . 
\]
Clearly $\mu_t \le d |V_{t-1}|$ since each vertex in $V_{t-1}$ has $d$ neighbors in expectation. To see that $\mu_t$ is not much smaller than this, note that since $E_{<t}$ holds $|V_{<t}|$ is a geometric sum dominated by its largest term, i.e.,  
\[
|V_{<t}| 
= O(|V_{t-1}|)
= O(d^{t-1}) 
= O(d^{t^*} d^{-(t^*-t+1)})
= o(d^{-(t^*-t+1)} n) \, ,
\]
where we used $d^{t^*}=o(n)$. Thus
\begin{equation}
\label{eq:nprime-ind}
n' = \big( 1-o(d^{-(t^*-t+1)}) \big) \,n \, .
\end{equation}
For $p'$, the error term comes from having more than one neighbor in $V_{t-1}$. Taking the Taylor series gives
\begin{align}
p' 
&= \big( 1-O(p |V_{t-1}|) \big) \,p |V_{t-1}| 
\nonumber \\
&= \big( 1-O(d^t/n) \big) \,p |V_{t-1}| 
\nonumber \\
&= \left( 
1-O\big( (\gamma/d) \,d^{-(t^*-t)} \big) 
\right) 
p |V_{t-1}| \, ,
\label{eq:pprime-ind}
\end{align}
where we used $d^{t^*}/n = \gamma/d$. 
Combining~\eqref{eq:nprime-ind} and~\eqref{eq:pprime-ind} gives
\[
\mu_t = n'p' = \left( 
1-O\big( (\gamma/d) \,d^{-(t^*-t)} \big) 
\right) 
\,d |V_{t-1}| \, .
\]
This proves~\eqref{eq:mult1} with
\begin{equation}
\label{eq:delta}
\delta_t = B (\gamma/d) \,d^{-(t^*-t)} 
\end{equation}
for some constant $B > 0$. 

To prove that~\eqref{eq:mult2} holds with probability $1-o(1/n^3)$, we invoke the Chernoff bound in Lemma~\ref{lem:chernoff-2}. Conditioning on $E_{<t}$ implies $\mu_t = \Theta(d^t) = \Omega(\log^t n)$, so~\eqref{eq:chernoff-2} implies that~\eqref{eq:mult2} holds with probability $1-o(1/n^3)$, where 
\begin{equation}
\label{eq:eps}
\eps_t = A (\log n)^{-(t-1)/2} 
\end{equation}
for some constant $A > 0$.

Finally, we have
\begin{align*}
-\log \Pi^{-}_t 
&= -\sum_{t'=2}^t \log ( 1 - \delta_{t'} ) 
- \sum_{t'=2}^t \log ( 1 - \eps_{t'} ) \\
&= -O\!\left(
\sum_{t'=2}^t \delta_{t'} 
+ \sum_{t'=2}^t \eps_{t'} 
\right) \, .
\end{align*}
Both these sums are geometric and are dominated by their largest term. Thus for any $2 \le t \le t^*$, recalling $\gamma/d = o(1)$ gives
\[
-\log \Pi^{-}_t 
= O(\delta_t + \eps_2) \\
= O\big( (\gamma/d) \,d^{-(t^*-t)} + (\log n)^{-1/2} \big) 
= o(1) \, .
\]
This implies $\Pi^-_t = 1-o(1)$, and similarly for $\Pi^+_t$. This completes the proof of~\eqref{eq:Nupper-lower-expand0}, which along with Corollary~\ref{cor:err_prob_deg} yields~\eqref{eq:Nupper-lower-expand}.
\end{proof}

\begin{proof}[Proof of Lemma~\ref{lem:from-a-pair}] Following~\cite{BollMitPra}, we first show that with high probability, for all distinct $u, v\in V$,
\[
|V_1(u) \cap V_1(v)| \le 2 \, ,
\]
i.e., no two vertices have more than two common neighbors. This follows from the classic fact that sparse random graphs \whp\ have no small subgraphs with more than one loop, i.e., with more edges than vertices.
A pair $u,v$ with three common neighbors forms a subgraph with $5$ vertices and $6$ edges, namely a copy of a complete bipartite graph $K_{2,3}$. The expected number of such pairs is 
\[
\binom{n}{2} 
\binom{n-2}{3} \,p^6 
= O(n^5 p^6)
= O(d^6/n)
= o(1) \, .
\]
Thus, by the first moment method, \whp\ no such pair exists. 

This implies \whp\ for all distinct $u,v$
\[
\deg u + \deg v - 2
\;\le\; |V_1(u)\cup V_1(v)|
\;\le\; \deg u + \deg v \, , 
\]
and in particular 
\[
|V_1(u) \cup V_1(v)|
= (1-o(1)) (\deg u + \deg v) \, .
\]
For each $t$, we can view
$V_t(u) \cup V_t(v)$ as the $t$th shell around the set $\{u,v\}$. As in the proof of Lemma~\ref{lem:expand}, \emph{mutatis mutandis} the main fluctuations in the size of these shells are due to the first step, and \whp\ the remaining steps each expand by a factor very close to $d$. Thus we have \whp\ for all $u,v$,
\begin{align*}
|V_{t^*}(u) \cup V_{t^*}(v)|
&= (1 \pm o(1)) d^{t^*-1} 
|V_1(u) \cup V_1(v)| \\
&= (1 \pm o(1)) \,d^{t^*-1}
(\deg u + \deg v) \, .
\end{align*}
Along with Lemma~\ref{lem:expand} this implies that $V_{t^*}(u)$ and $V_{t^*}(v)$ have a small intersection, i.e., 
\begin{align*}
|V_{t^*}(u) \cap V_{t^*}(v)| 
&= |V_{t^*}(u)| 
+ |V_{t^*}(v)| 
- |V_{t^*}(u) \cup V_{t^*}(v)|
\\
&= o(1) \,d^{t^*-1}
(\deg u + \deg v)
\, ,
\end{align*}
and that the symmetric difference is almost as large as the union, 
\begin{align*}
|\Delta_{t^*}(u,v)| 
&= |V_{t^*}(u) \cup V_{t^*}(v)| 
- |V_{t^*}(u) \cap V_{t^*}(v)| 
\\
&= (1 \pm o(1)) \,d^{t^*-1}
(\deg u + \deg v) \, .
\end{align*}
Finally, Corollary~\ref{cor:err_prob_deg} gives $\deg u + \deg v \ge (2-o(1)) \alpha d$, whereupon $d^{t^*} = \gamma n/d$ gives~\eqref{eq:from-a-pair}. 
\end{proof}

\end{document}